\newcommand{\tauo}{\tau_{\text{o}}}
\DeclareMathOperator\inv{inv}
\DeclareMathOperator\des{des}
\DeclareMathOperator\asc{asc}
\DeclareMathOperator\ninv{ninv}
\newcommand*{\myproofname}{Proof}
\newenvironment{claimproof}[1][\myproofname]{\begin{proof}[#1]}{\end{proof}}
\newtheorem{theorem}{Theorem}[section]
\newtheorem{lemma}[theorem]{Lemma}
\newtheorem{example}[theorem]{Example}
\newtheorem{remark}[theorem]{Remark}
\newtheorem{claim}[theorem]{Claim}
\newcommand*{\ceilfrac}[2]{\mathopen{}\left\lceil\frac{#1}{#2}\right\rceil\mathclose{}}
\newcommand*{\floorfrac}[2]{\mathopen{}\left\lfloor\frac{#1}{#2}\right\rfloor\mathclose{}}
\title{Descents and inversions in powers of permutations}
\author{Stijn Cambie \thanks{Department of Computer Science, KU Leuven Campus Kulak-Kortrijk, 8500 Kortrijk, Belgium. Supported by a postdoctoral fellowship by the Research Foundation Flanders (FWO) with grant number 1225224N. Email: {\tt stijn.cambie@hotmail.com}.} \and Jun Yan\thanks{Mathematics Institute, University of Warwick, UK. Email: {\tt jun.yan@warwick.ac.uk}. Supported by the Warwick Mathematics Institute CDT and funding from the UK EPSRC (Grant number: EP/W523793/1).}}
\date{}
\begin{document}

\maketitle

\begin{abstract}
In this paper, we generalise several recent results by Archer and Geary on descents in powers of permutations, and confirm all their conjectures. Specifically, for all $k\in\mathbb{Z}^+$, we prove explicit formulas for the expected numbers of descents and inversions in the $k$-th powers of permutations in $\mathcal{S}_n$ for all $n\geq2k+1$. We also compute the number of Grassmanian permutations in $\mathcal{S}_n$ whose $k$-th powers remain Grassmanian, and the number of permutations in $\mathcal{S}_n$ whose $k$-th powers have the maximum number of descents. 
\end{abstract}

\section{Introduction}
Given a permutation $\pi\in\mathcal{S}_n$, a \textit{descent} in $\pi$ is an index $i\in[n-1]$ satisfying $\pi(i)>\pi(i+1)$, while an \textit{inversion} in $\pi$ is a pair $i<j$ of indices in $[n]$ satisfying $\pi(i)>\pi(j)$. The number of descents and the number of inversions in $\pi$ are denoted by $\des(\pi)$ and $\inv(\pi)$, respectively.

It is easy to show that the expected number of descents in a random permutation $\pi\in\mathcal{S}_n$ is $\frac{n-1}2$, as for each $i\in[n-1]$, the events $\pi(i)>\pi(i+1)$ and $\pi(i)<\pi(i+1)$ are equally likely. A similar argument shows that the expected number of inversions in a permutation $\pi\in\mathcal{S}_n$ is $\frac{n(n-1)}{4}$. 

Recently, in \cite{AG}, while studying the number of permutations $\pi\in\mathcal{S}_n$ whose square, or whose cube, have a fixed small number of descents, Archer and Geary conjectured that for all but the first few values of $n$, the expected number of descents in $\pi^2$ and in $\pi^3$ for $\pi\in\mathcal{S}_n$ are both $\frac{n-1}2-\frac2n$. 

In this paper, we confirm this conjecture. Moreover, we prove explicit formulas for the expected numbers of descents and inversions in the $k$-th powers of permutations in $\mathcal{S}_n$ for all $k\in\mathbb{Z}^+$ and $n\geq2k+1$.

These formulas will be expressed in terms of several divisor functions. Recall that for $k\in\mathbb{Z}^+$, $\tau(k)$ denotes the number of divisors of $k$ and $\sigma(k)=\sum_{d\mid k}d$ denotes the sum of the divisors of $k$. Let $\nu_2(k)$ be the $2$-adic valuation of $k$, i.e., the number of prime factors $2$ in the prime factorization of $k$. Define $\tauo(k)=\tau\left( k/2^{\nu_2(k)}\right)$ to be the number of odd divisors of $k$. We will show that
\begin{theorem}\label{descent}
For $k\in\mathbb{Z}^+$ and $n\geq2k+1$, the expected number of descents in the $k$-th powers of permutations in $\mathcal{S}_n$ is
$$\frac1{n!}\sum_{\pi\in\mathcal{S}_n}\des(\pi^k)=\frac{n-1}2-\frac{\tau^2(k)-\tau(k)-\tauo(k)+\sigma(k)}{2n}.$$
\end{theorem}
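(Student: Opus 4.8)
The plan is to evaluate the expectation position-by-position via linearity of expectation, collapse each position's contribution to a local ``correction term'' using a transposition symmetry, and then read off those corrections from elementary cycle statistics of a uniform permutation.

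First I would write $\des(\pi^k)=\sum_{i=1}^{n-1}\mathbf{1}[\pi^k(i)>\pi^k(i+1)]$, so that by linearity the sought quantity equals $\sum_{i=1}^{n-1}\Pr[\pi^k(i)>\pi^k(i+1)]$, the probability taken over uniform $\pi\in\mathcal{S}_n$. Since $\pi^k(i)$ and $\pi^k(i+1)$ are always distinct, it is convenient to set $D_i:=\Pr[\pi^k(i)>\pi^k(i+1)]-\Pr[\pi^k(i)<\pi^k(i+1)]$, whence the target expectation is $\frac{n-1}2+\frac12\sum_{i=1}^{n-1}D_i$. Everything then reduces to computing $\sum_i D_i$.

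The key idea is a symmetry reduction. I would conjugate by the adjacent transposition $\tau=(i\ \,i{+}1)$: the map $\pi\mapsto\tau\pi\tau$ is a bijection of $\mathcal{S}_n$ and satisfies $(\tau\pi\tau)^k=\tau\pi^k\tau$, which relates the descent event for $\pi$ to one for $\tau\pi\tau$. The only pair of values whose relative order is reversed by $\tau$ is $\{i,i+1\}$ itself, so in the difference $D_i$ all configurations cancel except those with $\{\pi^k(i),\pi^k(i+1)\}=\{i,i+1\}$. Keeping track of the two surviving cases yields the clean identity $D_i=S_i-F_i$, where $F_i=\Pr[\pi^k(i)=i,\ \pi^k(i+1)=i+1]$ (both $i$ and $i+1$ fixed by $\pi^k$) and $S_i=\Pr[\pi^k(i)=i+1,\ \pi^k(i+1)=i]$ ($\pi^k$ swaps them). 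Because conjugation preserves cycle type and acts transitively on ordered pairs of distinct points, $F_i$ and $S_i$ are independent of $i$; writing them as $F$ and $S$ gives $\sum_{i=1}^{n-1}D_i=(n-1)(S-F)$.

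Next I would compute $F$ and $S$ from the cycle structure through $1$ and $2$, using two standard counts for uniform $\pi$: the probability that $1$ and $2$ lie in a common cycle of length $d$ is $\frac{d-1}{n(n-1)}$, and the probability that they lie in distinct cycles of lengths $d_1,d_2$ (with $d_1+d_2\le n$) is $\frac{1}{n(n-1)}$ per ordered pair. A point is fixed by $\pi^k$ exactly when its cycle length divides $k$, while $\pi^k$ swaps $1$ and $2$ exactly when they lie in a common cycle of length $d$ with $d\mid2k$ and $d\nmid k$ at cyclic distance $d/2$; within such a cycle the distance between $1$ and $2$ is uniform on $\{1,\dots,d-1\}$, so each admissible $d$ contributes $\frac{1}{n(n-1)}$ to $S$. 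Here the hypothesis $n\ge2k+1$ is precisely what ensures every relevant cycle length (at most $2k$) fits in $[n]$, so no divisor sum is truncated. Summing gives $F=\frac{\tau^2(k)-\tau(k)+\sigma(k)}{n(n-1)}$ and $S=\frac{\tau(2k)-\tau(k)}{n(n-1)}$.

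Finally I would assemble the pieces. Using the elementary identity $\tau(2k)-\tau(k)=\tauo(k)$ (write $k=2^{\nu_2(k)}m$ with $m$ odd), one gets $F-S=\frac{\tau^2(k)-\tau(k)-\tauo(k)+\sigma(k)}{n(n-1)}$, and substituting into $\frac{n-1}2+\frac12(n-1)(S-F)$ produces exactly $\frac{n-1}2-\frac{\tau^2(k)-\tau(k)-\tauo(k)+\sigma(k)}{2n}$, as claimed. The hard part will be the symmetry reduction of the third paragraph: verifying that the transposition conjugation cancels all contributions to $D_i$ except the ``both fixed'' and ``swapped'' configurations, and doing so with the correct signs. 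Once that identity is in hand, the cycle-counting and the divisor identity are routine.
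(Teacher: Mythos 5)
Your proof is correct, and every ingredient in it appears in the paper, but you assemble them along the streamlined route that the paper only sketches in its closing remark of Section~\ref{sec:expectation}, rather than the route taken in its official proof of \cref{descent}. Your conjugation involution $\pi\mapsto\tau\pi\tau$ with $\tau=(i\ \,i{+}1)$, showing that all configurations with $\{\pi^k(i),\pi^k(i+1)\}\neq\{i,i+1\}$ cancel in $D_i$, is exactly \cref{clm:often_half} of the paper, proved there by the same label-switching bijection; your computations of $F$ and $S$ are probabilistic rephrasings of \cref{ijfixed,ijexchange} (your characterization of the swap case, a common cycle of length $d$ with $d\mid 2k$, $d\nmid k$ and diametric distance $d/2$, is equivalent to the paper's $d=2t$ with $t\mid k$, $2t\nmid k$, and your identity $\tau(2k)-\tau(k)=\tauo(k)$ is right). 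The genuine difference is that the paper's formal proof of \cref{descent} never invokes \cref{clm:often_half}: it instead specializes the Type 1--7 case analysis from the proof of \cref{inversion}, which requires the extra counting \cref{itoi,itoj} to verify by explicit computation that Types 2--5 contribute equally many ascents and descents --- a fact your involution yields for free, with no counting. What your route buys is a shorter, self-contained proof of \cref{descent} that needs only the two ``surviving'' counts; what the paper's route buys is that one set of lemmas simultaneously drives both \cref{descent} and \cref{inversion}. Your use of $n\geq 2k+1$ is also placed correctly (it guarantees that two disjoint cycles of length at most $k$, or one cycle of length at most $2k$, fit inside $[n]$, so no divisor sum is truncated); the paper's remark additionally observes that a cancellation between the $(k,k)$ two-cycle configuration in $F$ and the $2k$-cycle configuration in $S$ extends validity to all $n\geq k+\ell(k)$, a refinement your argument does not pursue but also does not need.
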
 
%Here $2k+1$ can even be replaced by $k+\ell(k)$, where $\ell(k)=\frac k{d_1(k)}$ is the largest divisor of $k$ different from $k$ ($d_1(k)$ is the smallest prime dividing $k$). The stronger bound would not be true for the following related theorem for inversions.

\begin{theorem}\label{inversion}
For $k\in\mathbb{Z}^+$ and $n\geq2k+1$, the expected number of inversions in the $k$-th powers of permutations in $\mathcal{S}_n$ is
$$\frac1{n!}\sum_{\pi\in\mathcal{S}_n}\inv(\pi^k)=\frac{n(n-1)}4-\frac{(\tau(k)-1)n}6-\frac{\tau^2(k)-\tau(k)-\tauo(k)+\sigma(k)}{12}.$$
\end{theorem}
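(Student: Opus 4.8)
The plan is to compute the expected number of inversions by linearity of expectation, writing
$$\frac1{n!}\sum_{\pi\in\mathcal{S}_n}\inv(\pi^k)=\sum_{1\leq i<j\leq n}\Pr_{\pi}\left[\pi^k(i)>\pi^k(j)\right],$$
and then analysing the individual pair probabilities. The central observation is that, for a uniformly random $\pi\in\mathcal{S}_n$, if the positions $i$ and $j$ lie in \emph{distinct} cycles of $\pi$ with the values $\pi^k(i)$ and $\pi^k(j)$ being ``generic'', then by symmetry $\Pr[\pi^k(i)>\pi^k(j)]=\frac12$, contributing exactly $\frac{n(n-1)}4$ in total. The entire correction term must therefore come from pairs $(i,j)$ lying in the \emph{same} cycle of $\pi$, where the relative order of $\pi^k(i)$ and $\pi^k(j)$ is biased. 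I expect the bulk of the work to be in isolating and quantifying this same-cycle bias.

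First I would set up the cycle decomposition: fix a pair $i<j$ and condition on the (unordered) cycle structure, using the standard fact that the probability that $i$ and $j$ lie on a common cycle of length $\ell$, together with the induced cyclic arrangement, can be computed explicitly. Within a single cycle of length $\ell$, the map $\pi^k$ acts as a power of an $\ell$-cycle, and whether $\pi^k(i)>\pi^k(j)$ depends only on the cyclic distance between $i$ and $j$ along the cycle and on $\gcd(k,\ell)$. The next step is to translate the descent computation of \Cref{descent} into this language: since an inversion is a pair and a descent is the special case $j=i+1$, I would aim to \emph{reuse} the arithmetic already established for \Cref{descent}. Concretely, the quantity $\tau^2(k)-\tau(k)-\tauo(k)+\sigma(k)$ appearing as the descent correction should reappear, and the strategy is to express $\sum_{i<j}\Pr[\text{inversion at }(i,j)]-\frac{n(n-1)}4$ as a weighted sum over cyclic gaps that telescopes back to descent-type contributions.

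The key technical step is a summation-by-distance argument. For a pair in a common $\ell$-cycle separated by cyclic distance $d$, the bias of $\Pr[\pi^k(i)>\pi^k(j)]$ away from $\frac12$ should depend in a controlled way on $d$ and $\gcd(k,\ell)$; summing over all gaps $1\leq d\leq \ell-1$ and over all cycle lengths $\ell$ (weighted by the probability of the relevant cycle configuration), the $d$-dependence should produce the two distinct $n$-scalings visible in the statement. The linear-in-$n$ term $-\frac{(\tau(k)-1)n}6$ I expect to arise from the ``diagonal'' sum $\sum_d d$-type contributions over a cycle, reflecting that longer gaps accumulate more inversions, while the constant term $-\frac{\tau^2(k)-\tau(k)-\tauo(k)+\sigma(k)}{12}$ should match $\frac16$ of the descent correction, consistent with the heuristic that the $\frac{n-1}2$-versus-$\frac{n(n-1)}4$ relationship between descents and inversions carries a factor of roughly $\frac n2$. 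The hypothesis $n\geq2k+1$ is exactly what guarantees that no ``wrap-around'' degeneracies occur: cycles short enough to cause $\gcd$-collisions beyond the generic case are excluded, so the clean arithmetic holds.

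The main obstacle will be carrying out the distance-weighted summation cleanly and verifying that the two correction terms separate correctly into their $n$-linear and $n$-independent parts. In particular, establishing that the coefficient of the linear term is exactly $\frac{\tau(k)-1}6$ requires a precise count of how many cyclic gaps $d$, across all admissible cycle lengths, yield each possible value of $\gcd(k,\ell)$; this is where the divisor-counting identities underlying $\tau$ and $\sigma$ must be invoked. I would verify the final expressions against the descent formula in the boundary case and against the known unbiased value $\frac{n(n-1)}4$ for $k=1$ (where $\tau(1)=\sigma(1)=\tauo(1)=1$ makes both correction terms vanish), which serves as a useful consistency check before committing to the full computation.
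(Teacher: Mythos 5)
Your opening step---linearity of expectation over pairs $i<j$---is the same as the paper's, but your central structural claim, that pairs lying in \emph{distinct} cycles of $\pi$ are unbiased and hence ``the entire correction term must come from pairs $(i,j)$ lying in the same cycle,'' is false, and the plan built on it would fail. The bias is not governed by whether $i$ and $j$ share a cycle; it is governed by the value--position coincidences $\pi^k(i)\in\{i,j\}$ or $\pi^k(j)\in\{i,j\}$. Concretely, if the cycle containing $i$ has length dividing $k$ and $j$ lies in a different cycle, then $\pi^k(i)=i$ while $\pi^k(j)=y$ is uniform over $[n]\setminus\{i,j\}$ (this is \cref{itoi} of the paper), so $\Pr[\pi^k(i)>\pi^k(j)]=\frac{i-1}{n-2}$, far from $\frac12$. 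These distinct-cycle configurations do not cancel when summed over pairs: in the paper's accounting they are Types 2, 4 and the $\tau^2(k)$-part of Type 6, and they contribute $(\tau(k)n-\tau^2(k)-\sigma(k))\frac{n!}{3}+\tau^2(k)\frac{n!}{2}$ to $\sum_{\pi}(\ninv(\pi^k)-\inv(\pi^k))$. In fact the same-cycle configurations alone (Types 3, 5, 7 and the $\sigma(k)-\tau(k)$ part of Type 6) sum to $-(n-\tau(k)-\tauo(k))\frac{n!}{3}+(\sigma(k)-\tau(k)-\tauo(k))\frac{n!}{2}$, whose leading term is \emph{negative}; so restricting attention to same-cycle pairs, as you propose, would predict an expected number of inversions \emph{exceeding} $\frac{n(n-1)}4$---the wrong sign of the correction. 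It is precisely the distinct-cycle bias that overturns this and produces the $-\frac{(\tau(k)-1)n}{6}$ term.

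Relatedly, the ``summation-by-distance'' analysis you propose for same-cycle pairs targets a quantity that vanishes. Conditioned on $i,j$ lying in a common $\ell$-cycle at any cyclic distance, with $\pi^k(i),\pi^k(j)\notin\{i,j\}$, relabelling symmetry among the values in $[n]\setminus\{i,j\}$ (the paper's \cref{independent}) already forces probability exactly $\frac12$; there is no residual bias depending on the cyclic distance and $\gcd(k,\ell)$ for you to sum. The correct dichotomy is therefore not ``same cycle versus distinct cycles'' but ``$\{\pi^k(i),\pi^k(j)\}$ disjoint from $\{i,j\}$ versus not,'' and the whole proof reduces to counting, for each of the finitely many coincidence patterns ($\pi^k(i)=i$, $\pi^k(i)=j$, $\pi^k(j)=i$, $\pi^k(j)=j$, and their combinations), how many permutations realize it; this is what the paper's \cref{itoi,itoj,ijfixed,ijexchange} do via cycle-length arithmetic ($d\mid k$, $k\equiv t\pmod{\ell}$, etc.), and it is where $\tau$, $\sigma$ and $\tauo$ actually come from. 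Your outline never isolates this mechanism, so the computation you describe could not produce the stated formula.
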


In~\cref{sec:expectation}, we first prove a few lemmas that count for all pairs $(i,j)$ and $(x,y)$, the number of $\pi$ such that $\pi^k$ sends $(i,j)$ to $(x,y)$. These lemmas are then used to prove our main results that determine the expected number of descents (\cref{descent}) and inversions~(\cref{inversion}) in $\pi^k$ over all $\pi \in \mathcal S_n$. Note that setting $k=2$ or $k=3$ in~\cref{descent} yield the same expectation of $\frac{n-1}2-\frac2n$, which confirms~\cite[Conj.~6.1]{AG}.
% Conjecture 6.1 of Archer and Geary~\cite{AG}.

In Section \ref{sec:reldir}, we consider \textit{Grassmanian permutations}, which are permutations $\pi$ with $\des(\pi) \le 1$. Specifically, we compute the number of Grassmanian permutations whose $k$-th power is also Grassmanian. By~\cite[Lem.~2.2]{AG}, it is sufficient to determine the number of such permutations satisfying $\pi(1) \not=1$ and $\pi(n) \not=n$, as those with $\pi(1)=1$ or $\pi(n)=n$ can be counted recursively. The following result shows that any such permutation $\pi$ is either a cyclic shift or satisfies $\pi^k=\text{id}$ or $\pi^{k-1}=\text{id}$.
% or a $(q-1)$-th root of the identity permutation. 

\begin{theorem}\label{thr:twiceGrassmanian}
    Let $k\geq3$. If a Grassmanian permutation $\pi \in \mathcal S_n$ satisfies $\pi(1)\not=1$, $\pi(n)\not=n$ and $\des({\pi^k})=1$, then either
    \begin{itemize}
        \item there exists some $s\in[n]$ such that $\pi(i)\equiv i+s\pmod n$ for all $i\in[n]$, or
        \item $\pi$ is a $(k-1)$-th root of the identity permutation, i.e., $\pi^{k-1}=\text{id}$.
    \end{itemize}
\end{theorem}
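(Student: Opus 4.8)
The plan is to first pin down the shape of $\pi$ and then reduce the statement to a rigidity property of the powers of $\pi$ relative to that shape. Since $\des(\pi)\le 1$ and $\pi(1)\neq1$, the permutation $\pi$ is not the identity, so $\des(\pi)=1$; let $d$ be its unique descent. Then $\pi$ is increasing on $[1,d]$ and on $[d+1,n]$, with value sets $A=\pi(\{1,\dots,d\})$ and $B=\pi(\{d+1,\dots,n\})$, and the boundary conditions $\pi(1)\neq1,\ \pi(n)\neq n$ force $1\in B$ and $n\in A$, hence $\pi(d+1)=\min B=1$ and $\pi(d)=\max A=n$. I would immediately record the characterisation of the first alternative: $\pi$ is a cyclic shift if and only if $A$ is a final segment $\{n-d+1,\dots,n\}$, equivalently every value of $A$ exceeds every value of $B$, equivalently $\pi(n)<\pi(1)$ (so that, read cyclically, $\pi$ has a single descent). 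Thus ``$\pi$ is not a cyclic shift'' means precisely that $A$ and $B$ interleave, i.e.\ $\pi(n)>\pi(1)$.

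The key is to reduce everything to the following \emph{Proposition}: if $\pi$ is not a cyclic shift, then the only powers of $\pi$ that are Grassmanian are $\mathrm{id}$ and $\pi$ itself. Granting it, $\des(\pi^k)=1$ makes $\pi^k$ a non‑identity Grassmanian power, so either $\pi$ is a cyclic shift (first bullet) or $\pi^k=\pi$, i.e.\ $\pi^{k-1}=\mathrm{id}$ (second bullet). To prove the Proposition I would pass to cyclic descents: for $\sigma\in\mathcal S_n$ let $c(\sigma)$ be the number of $i$ (indices mod $n$) with $\sigma(i)>\sigma(i+1)$, so that $c(\sigma)=\des(\sigma)+[\sigma(n)>\sigma(1)]$. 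Two facts drive the argument. First, $c(\sigma)=1$ holds exactly when $\sigma$ is a cyclic shift, and any Grassmanian $\sigma$ satisfies $c(\sigma)\le 2$. Second, $\pi^{-1}$ is increasing on $A$ and on $B$ and its descents correspond to the $B\!\to\!A$ transitions read along $1,2,\dots,n$; since $\pi$ starts its value pattern in $B$ and ends in $A$, one checks that $\pi^{-1}$ is Grassmanian if and only if $B$ is an initial segment, i.e.\ if and only if $\pi$ is a cyclic shift. Hence for a non‑shift $\pi$ the inverse $\pi^{-1}$ is \emph{not} Grassmanian.

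The heart is then the \emph{Cyclic Lemma}: for a non‑shift $\pi$, the only powers $\pi^m$ with $c(\pi^m)\le2$ are $\mathrm{id},\ \pi$, and $\pi^{-1}$. Combined with the second fact above this finishes the Proposition, since a Grassmanian power has $c\le2$, hence lies in $\{\mathrm{id},\pi,\pi^{-1}\}$, and intersecting with ``Grassmanian'' removes $\pi^{-1}$, leaving $\{\mathrm{id},\pi\}$. To attack the Cyclic Lemma I would exploit that a non‑shift $\pi$ genuinely has two cyclic‑descent loci (at $d$ and at the wrap‑around), i.e.\ two maximal cyclically increasing runs, and track how these runs behave under powering. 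A first, clean step is extreme‑value tracking: if $c(\pi^m)\le2$ then the maximum $n$ and the minimum $1$ must each sit at a boundary of one of the (at most two) cyclic runs, which forces congruence conditions on $m$ modulo the $\pi$‑cycle lengths of $n$ and of $1$; on a power of $\pi$ these positions are governed by the orbits $\pi^{-m}(n)$ and $\pi^{-m}(1)$, pinned by $\pi^{-1}(n)=d$, $\pi^{-1}(1)=d+1$.

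The main obstacle is precisely upgrading these partial constraints into the full Cyclic Lemma. The extreme‑value step alone is too weak when $1$ and $n$ lie in short cycles (there the congruences modulo their cycle lengths are nearly vacuous), so the real work is to use the monotonicity of $\pi^m$ \emph{within each of its two cyclic runs} to propagate the constraint from the extremes to all interior values, thereby forcing $m\equiv 0$ or $\pm1\pmod{\mathrm{ord}(\pi)}$. Equivalently, one must show that the two cyclic descents of a non‑shift $\pi$ cannot both be ``absorbed'' upon powering, so that $c(\pi^m)$ strictly exceeds $2$ for every $m\not\equiv 0,\pm1$. I expect this bookkeeping—controlling how the interleaving pattern of $A$ and $B$ multiplies the number of cyclic runs under iteration—rather than the structural set‑up, to be the technical crux of the whole theorem.
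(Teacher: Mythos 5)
Your structural set-up is correct: the normal form of $\pi$ (unique descent at $d$ with $\pi(d)=n$, $\pi(d+1)=1$), the characterisation of cyclic shifts as exactly those $\pi$ for which $A$ is a final segment (equivalently $\pi(n)<\pi(1)$), the bound $c(\sigma)\le 2$ for Grassmanian $\sigma$, and the fact that $\pi^{-1}$ is not Grassmanian when $\pi$ is not a shift (the descents of $\pi^{-1}$ are the $B\to A$ transitions along the values $1,2,\dots,n$, and interleaving forces at least two of them) are all true, and the deduction of the theorem from your Proposition is valid, since $\pi^k=\pi$ is the same as $\pi^{k-1}=\text{id}$.

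The genuine gap is your ``Cyclic Lemma'': the assertion that for a non-shift Grassmanian $\pi$ every power $\pi^m$ with $m\not\equiv 0,\pm1\pmod{\mathrm{ord}(\pi)}$ has more than two cyclic descents. You do not prove it; you sketch an extreme-value step, concede it fails when $1$ and $n$ lie in short cycles, and defer the rest to unspecified bookkeeping that you yourself call the technical crux. But this lemma \emph{is} the theorem, and more: your Proposition is already equivalent to the statement being proved (for the relevant exponents), and the Cyclic Lemma strengthens it from the single power $k$ to all powers, and from Grassmanian permutations to the strictly larger class of permutations with at most two cyclic descents (i.e.\ position-rotations of Grassmanian permutations). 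So what you have is a correct reformulation together with an unproven claim at least as hard as the original result. For comparison, the paper closes exactly this kind of gap by a direct argument: it first shows (\cref{claim:pi^knot1n}) that $\pi^k(1)\neq1$ and $\pi^k(n)\neq n$, since otherwise $\pi^k=\text{id}$, so $\pi^k(j)=n$, $\pi^k(j+1)=1$ for some $j$; then, assuming $\pi^{k-1}\neq\text{id}$, it proves by induction on $\ell$ that $\pi(i-\ell)=\pi^k(j-\ell)=n-\ell$ (when $i\le j$, and symmetrically otherwise), which forces $\pi$ to be a cyclic shift, with the degenerate case $\pi(n)=\pi^k(n)=n-t-1$ handled by the separate technical \cref{lemma:k-1isid} showing that this configuration itself implies $\pi^{k-1}=\text{id}$. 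Some such value-by-value propagation of the single-descent constraint, applied simultaneously to $\pi$ and $\pi^k$, is precisely what your sketch still needs to supply.
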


Since the only Grassmanian with $\pi(1) \not=1, \pi(n) \not=n$ and $\pi^2=\text{id}$ is a cyclic shift permutation (see~\cite[Thm.~2.3]{AG}), the $k=3$ case of Theorem \ref{thr:twiceGrassmanian} confirms~\cite[Conj.~3.2]{AG}. As cyclic shifts are easy to handle, the problem reduces to the following result enumerating Grassmanian permutations that are $k$-th roots of the identity permutation. The case when $k$ is prime gives a nice formula. 

\begin{theorem}\label{thr:Grassmanianqthpower}
For every $k\geq2$, let $\mathcal{D}_k$ be the set of divisors of $k$ excluding 1, and let $$N_k=\frac1k\sum_{d\mid k, d\not=k}\mu(d)(2^{\frac kd}-2).$$
Then the number of Grassmanian permutations $\pi\in\mathcal{S}_n$ with $\pi(1)\not=1$, $\pi(n)\not=n$ and $\pi^k=\text{id}$ is equal to the number of solutions in non-negative integers of the linear equation
\begin{equation}\label{eq:count_total_order}
\sum_{d \in \mathcal D_k}  d \cdot \sum_{i=1}^{N_d} x_{d,i} = n.
\end{equation}
In particular, for a prime $p\geq2$, $N_p=\frac1p(2^p-2)$ and the number of Grassmanian permutation $\pi\in\mathcal{S}_n$ with $\pi(1)\not=1$, $\pi(n)\not=n$ and $\pi^p=\text{id}$ is $\binom{\frac np+N_p-1}{N_p-1}$ if $p\mid n$, and 0 otherwise. 
\end{theorem}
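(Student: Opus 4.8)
The plan is to set up an explicit bijection between the permutations being counted and multisets of aperiodic binary necklaces whose lengths divide $k$, and then to recognise the right-hand side of the claimed equation as counting exactly these multisets. First I would record the structural normal form of the relevant permutations. If $\pi$ is Grassmanian with $\pi(1)\neq1$, $\pi(n)\neq n$ and $\pi\neq\text{id}$, then $\pi$ has a unique descent, say at position $j$, so $\pi$ is increasing on $L:=\{1,\dots,j\}$ and on $H:=\{j+1,\dots,n\}$. Writing $A=\pi(L)$ and $B=\pi(H)$, the boundary conditions force $1\notin A$ and $n\in A$, hence $\pi(j)=n$ and $\pi(j+1)=1$; moreover the $i$-th smallest element of $A$ is at least $i+1$ and the $i$-th smallest of $B$ is at most $j+i-1$, so $\pi$ has \emph{no} fixed points. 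Thus $\pi$ is determined by the subset $A$, and $\pi^k=\text{id}$ is equivalent to every cycle of $\pi$ having length in $\mathcal D_k$ (a divisor of $k$ that is at least $2$).

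Next I would associate to each cycle $C=(c_0\ c_1\ \cdots\ c_{d-1})$, with $c_{t+1}=\pi(c_t)$, its \emph{type word} $w\in\{0,1\}^d$, where $w_t=0$ if $c_t\in L$ and $w_t=1$ if $c_t\in H$, read cyclically as a binary necklace of length $d$. Sending $\pi$ to the multiset of these necklaces is the bijection I want. The key lemma is that each such necklace is \emph{aperiodic}, and this I can prove cleanly: since $\pi$ is increasing on $L$ and on $H$, whenever $c_t$ and $c_{t'}$ have the same type, $c_t<c_{t'}$ implies $c_{t+1}<c_{t'+1}$. If $w$ had period $p\mid d$ with $p<d$, then $w_t=w_{t+p}$ for all $t$, so from $c_0<c_p$ (or $c_0>c_p$) one propagates to $c_t<c_{t+p}$ (respectively $>$) for all $t$, giving $c_0<c_p<c_{2p}<\cdots<c_{(d/p)p}=c_0$, a contradiction.

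The more delicate half is to show the map is a bijection by constructing its inverse, and this is where I expect the main obstacle to lie. Given a multiset of aperiodic necklaces of total length $n$ with $j$ zeros in total, I would assign to each node $u$ its forward type-sequence (the periodic binary sequence read along the necklace starting at $u$) and sort all nodes by this key; the $j$ nodes whose key begins with $0$ receive positions $1,\dots,j$ and the rest receive $j+1,\dots,n$, with $\pi$ defined as ``successor within the necklace''. Aperiodicity guarantees that the $d$ nodes of one necklace get distinct keys, and although distinct copies of the same necklace produce tied keys, the requirement that $\pi$ be increasing on $L$ and on $H$ forces a single consistent ordering of those copies, so $\pi$ is well defined. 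Verifying that this construction is genuinely inverse to the forward map, in particular that the monotone tie-breaking is forced and that increasingness on $L$ and $H$ indeed holds, is the crux of the argument.

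Finally I would count the multisets. The number of aperiodic binary necklaces of length $d$ equals the number of binary Lyndon words of length $d$, namely $\frac1d\sum_{e\mid d}\mu(e)2^{d/e}$, and a short M\"obius computation shows this equals $N_d$ (the term $e=d$ contributes $0$ and the $-2$ summands cancel for $d\ge2$). Labelling the $N_d$ necklaces of each length $d\in\mathcal D_k$ and letting $x_{d,i}$ be the multiplicity of the $i$-th one, a multiset of total length $n$ corresponds exactly to a non-negative integer solution of $\sum_{d\in\mathcal D_k}d\sum_{i=1}^{N_d}x_{d,i}=n$, which gives the claimed count. For a prime $p$ every non-constant word of length $p$ is aperiodic, so $N_p=\frac1p(2^p-2)$, and the equation reduces to $p\sum_{i=1}^{N_p}x_{p,i}=n$; this has no solution unless $p\mid n$, and otherwise has $\binom{n/p+N_p-1}{N_p-1}$ solutions by stars and bars.
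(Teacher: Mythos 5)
Your proof is correct, and it takes a genuinely different route from the paper's. The paper works cycle-by-cycle: it invokes the Gessel--Reutenauer enumeration of $n$-cycles by descent position (\cref{lem:descentati}) to count Grassmanian $d$-cycles (\cref{lem:Np}), and then proves two bespoke gluing lemmas, existence (\cref{lem:existence_composition}, via a swapping process) and uniqueness (\cref{lem:determined_order}), to identify the permutations in question with multisets of Grassmanian cycles of lengths in $\mathcal D_k$. You instead encode each cycle by its $L$/$H$ membership word and set up one global bijection with multisets of aperiodic binary necklaces --- this is exactly the Gessel--Reutenauer correspondence over a two-letter alphabet, so your single sorted-key construction simultaneously replaces the paper's citation of \cite{GR93} and both gluing lemmas; restricted to a single cycle it even reproves \cref{lem:Np} bijectively, since aperiodic necklaces of length $d$ with $i$ zeros correspond to Grassmanian $d$-cycles with unique descent at $i$. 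Your M\"obius step is right: for $d\ge2$ the $-2$ summands contribute $-\frac2d\sum_{e\mid d}\mu(e)=0$ and the $e=d$ term is $\mu(d)(2-2)=0$, so the Lyndon-word count equals $N_d$. The verifications you flag as the crux do go through with the very tool you already use for aperiodicity (equal types imply $\pi$ preserves order): if $\mathrm{key}(u)<\mathrm{key}(v)$, comparing at the first differing letter and propagating order backwards gives $u<v$, so sorting by keys recovers the natural order of $[n]$ up to ties; ties occur only between corresponding nodes of identical copies of a necklace, any consistent tie-break yields the same successor permutation because the copies are interchangeable, and consistency is forced since an inconsistent break creates a second descent. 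As for what each approach buys: the paper's is modular and uses a known enumeration as a black box, while yours is self-contained, bijective end to end, and handles repeated copies of the same cycle type transparently --- a case where the paper's uniqueness lemma, stated only for distinct cycles, needs a little extra care.
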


Finally, we provide a short answer to~\cite[Ques.~5.3]{AG}, which asks for the number of permutations whose $k$-th powers have the maximum number of descents, or equivalently are equal to the decreasing permutation. By doing this for any positive integer $k,$ this generalizes~\cite[Thm.~5.1, 5.2]{AG}. 
Let $d_1, d_2, \ldots, d_r$ be the divisors of $k$ with the same $2$-adic valuation as $k$. Define $$S_k(n)=\left\{ (a_1, ..., a_r) \mid a_i \in \mathbb N,\mbox{ } \sum_{i=1}^r a_i d_i = \floorfrac n2 \right\}.$$

\begin{theorem}
    The number of $\pi\in\mathcal S_n$ such that $\pi^k$ is the decreasing permutation is

     $$\sum_{(a_1, ..., a_r) \in S_k(n)} \frac{\floorfrac n2 ! \cdot \prod_{i=1}^r 2^{a_i(d_i-1)} }{\prod _{i=1}^r a_i!d_i^{a_i}  }  $$
\end{theorem}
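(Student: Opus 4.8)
The plan is to classify the cycle structure of any $\pi$ satisfying $\pi^k=\delta$, where $\delta\colon i\mapsto n+1-i$ denotes the decreasing permutation, and then to count such $\pi$ by a cycle-merging argument. Observe first that $\delta$ is a fixed-point-free involution when $n$ is even, and consists of $\floorfrac n2$ transpositions together with the single fixed point $\frac{n+1}2$ when $n$ is odd. Since raising a cycle of length $\ell$ to the $k$-th power splits it into $\gcd(\ell,k)$ cycles of length $\ell/\gcd(\ell,k)$, and every nontrivial cycle of $\delta$ has length $2$ while $\delta$ has at most one fixed point, each cycle of $\pi$ must satisfy $\ell/\gcd(\ell,k)\in\{1,2\}$. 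The case $\ell\mid k$ produces $\ell$ fixed points, so it can occur for at most one cycle and forces that cycle to have length $1$; hence $\pi$ fixes the central point exactly when $n$ is odd, and has no other fixed points.

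I would then treat the length-$2$ case: solving $\ell/\gcd(\ell,k)=2$ gives $\ell=2m$ with $\gcd(2m,k)=m$, which is equivalent to $m\mid k$ and $k/m$ odd. These $m$ are precisely the divisors of $k$ sharing the $2$-adic valuation of $k$, namely $m\in\{d_1,\dots,d_r\}$. Consequently every $\pi$ with $\pi^k=\delta$ has cycle type $\prod_i(2d_i)^{a_i}$, plus the central fixed point when $n$ is odd, and counting transpositions forces $\sum_i a_id_i=\floorfrac n2$, i.e.\ $(a_1,\dots,a_r)\in S_k(n)$. Each $2d_i$-cycle of $\pi$ accounts for exactly $d_i$ of the $\floorfrac n2$ transpositions of $\delta$.

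The key lemma is a per-cycle count. For a fixed fixed-point-free involution $\tau$ on $2d_i$ points, I would count the $(2d_i)$-cycles $c$ with $c^k=\tau$. Writing $c=(x_0\,x_1\,\cdots\,x_{2d_i-1})$, and using that $k=d_i\cdot(\text{odd})$ gives $k\equiv d_i\pmod{2d_i}$, one sees that $c^k$ pairs each $x_j$ with its antipode $x_{j+d_i}$; thus $c^k=\tau$ if and only if the $d_i$ antipodal pairs of the cyclic arrangement realise the transpositions of $\tau$. Assigning the $d_i$ pairs of $\tau$ to the $d_i$ diameters and choosing an orientation of each yields $d_i!\,2^{d_i}$ labelled sequences; since each cycle corresponds to its $2d_i$ rotations (all of which preserve the antipodal constraint), there are exactly $(d_i-1)!\,2^{d_i-1}$ such cycles.

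Finally I would assemble the count. Fixing $(a_1,\dots,a_r)\in S_k(n)$, partition the $\floorfrac n2$ transpositions of $\delta$ into $a_i$ unordered blocks of size $d_i$ for each $i$, in $\floorfrac n2!\big/\prod_i(d_i!)^{a_i}a_i!$ ways, and within each block form a $2d_i$-cycle in one of $(d_i-1)!\,2^{d_i-1}$ ways. Multiplying and using $(d_i-1)!/d_i!=1/d_i$ collapses the product to $\floorfrac n2!\prod_i 2^{a_i(d_i-1)}\big/\prod_i a_i!\,d_i^{a_i}$, and summing over $S_k(n)$ gives the stated formula. The main obstacle is the merging lemma of the third paragraph: correctly identifying the antipodal pairing induced by $c^k$ and accounting for the rotation symmetry so that the per-block count is exactly $(d_i-1)!\,2^{d_i-1}$. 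Once this is established, the cycle-structure classification and the set-partition bookkeeping with the telescoping simplification are routine.
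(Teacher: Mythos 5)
Your proposal is correct and follows essentially the same route as the paper: classify the cycle type of $\pi$ (forcing cycles of length $2d_i$ with $\nu_2(d_i)=\nu_2(k)$, plus the central fixed point when $n$ is odd), observe the antipodal pairing within each $2d_i$-cycle, and count by partitioning the $\floorfrac n2$ transpositions into blocks and multiplying by $(d_i-1)!\,2^{d_i-1}$ cyclic arrangements per block. Your write-up is in fact more detailed than the paper's (e.g.\ the explicit rotation-counting argument giving $d_i!\,2^{d_i}/(2d_i)$), but the decomposition and bookkeeping are identical.
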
    

\begin{proof}
    Since $\pi^k$ is the decreasing permutation and $\pi^{2k}$ is the identity, the only possible fixed point of $\pi$ is $\ceilfrac{n}{2}$ when $n$ is odd. Also, if a cycle in the cycle decomposition of $\pi$ has length $\ell\geq 2$, then we must have $\ell \mid 2k$ and $\ell \nmid k$. This implies that $\ceilfrac{n}{2}$ is actually a fixed point of $\pi$ when $n$ is odd, and the cycle decomposition of $\pi$ consists of $a_i$ cycles of length $2d_i$ for every $i \in[r]$ for some $(a_1, \ldots, a_r) \in S_k(n)$, and an additional 1-cycle when $n$ is odd. Note that if $j$ is in a cycle of length $2d_i$, then $n-j$ must be in the same cycle at distance $d_i$ away.
    
    Conversely, for each $(a_1, \ldots, a_r) \in S_k(n),$ there are $\frac{\floorfrac n2 !}{\prod_{i=1}^r a_i!(d_i!)^{a_i}} $ many ways to partition the elements into these collection of cycles, and for each cycle there are $(d_i-1)!2^{d_i-1}$ ways to order its elements. Each of these leads to a permutation $\pi$ for which $\pi^k$ is the decreasing permutation.
\end{proof}

    Note that if $\floorfrac n2$ is not a multiple of $2^{\nu_2(k)}$, or equivalently if $n \not \equiv 0,1 \pmod { 2^{\nu_2(k)+1}},$ then $S_k(n)=\emptyset$, so there are no permutations in $\mathcal S_n$ whose $k$-th power is the decreasing permutation.

\section{Expected numbers of descents and inversions}\label{sec:expectation}
Throughout this section, we let $n\geq2k+1\geq3$ and fix distinct $i,j\in[n]$. We first prove a series of lemmas that counts for distinct $x,y\in[n]$, how many $\pi\in\mathcal{S}_n$ satisfies $\pi^k(i)=x$ and $\pi^k(j)=y$. These lemmas will later be used to prove~\cref{descent,inversion}.

\begin{lemma}\label{independent}
If $x,y\in[n]\setminus\{i,j\}$, then the number of permutations $\pi\in\mathcal{S}_n$ satisfying $\pi^k(i)=x$ and $\pi^k(j)=y$ is independent of the choice of $x$ and $y$. 
\end{lemma}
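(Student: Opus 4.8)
The plan is to exploit a conjugation symmetry. The crucial algebraic fact is that conjugation commutes with taking powers: for any $\sigma\in\mathcal{S}_n$ we have $(\sigma\pi\sigma^{-1})^k=\sigma\pi^k\sigma^{-1}$. Consequently, if $\sigma$ fixes both $i$ and $j$, then $(\sigma\pi\sigma^{-1})^k(i)=\sigma(\pi^k(\sigma^{-1}(i)))=\sigma(\pi^k(i))$, and likewise $(\sigma\pi\sigma^{-1})^k(j)=\sigma(\pi^k(j))$. Thus conjugating by a permutation that fixes $i$ and $j$ transforms the target pair $(\pi^k(i),\pi^k(j))$ by applying $\sigma$ in each coordinate, while leaving the source pair $(i,j)$ untouched.

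Concretely, I would fix two pairs of distinct elements $(x,y)$ and $(x',y')$, both lying in $[n]\setminus\{i,j\}$, and exhibit a bijection between the permutations realising one pair and those realising the other. Since $i,j,x,y$ are four distinct indices and $i,j,x',y'$ are likewise four distinct indices, I can choose $\sigma\in\mathcal{S}_n$ fixing $i$ and $j$ with $\sigma(x)=x'$ and $\sigma(y)=y'$; such a $\sigma$ exists because $(x,y)$ and $(x',y')$ are both ordered pairs of distinct elements of the $(n-2)$-element set $[n]\setminus\{i,j\}$, and any such pair can be carried to any other by a permutation of that set (this uses $n\geq4$; for $n=3$ there is no admissible pair $(x,y)$ and the statement is vacuous). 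By the computation above, $\pi\mapsto\sigma\pi\sigma^{-1}$ then sends every $\pi$ with $\pi^k(i)=x$ and $\pi^k(j)=y$ to a permutation whose $k$-th power sends $i\mapsto x'$ and $j\mapsto y'$.

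It remains to observe that this map is a bijection between the two solution sets. Conjugation by $\sigma$ is a bijection of $\mathcal{S}_n$ onto itself whose inverse is conjugation by $\sigma^{-1}$, and applying the same identity to $\sigma^{-1}$ shows that this inverse carries the set for $(x',y')$ back to the set for $(x,y)$. Hence the two sets have equal cardinality, and since $(x',y')$ was arbitrary, the count depends only on $i$, $j$, $k$, and $n$, not on the particular admissible $x,y$.

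I do not anticipate a serious obstacle: the argument is a clean orbit/symmetry argument, and the only points needing care are verifying that the four indices involved are genuinely distinct (so that the conjugating permutation exists) and checking the elementary identity $(\sigma\pi\sigma^{-1})^k=\sigma\pi^k\sigma^{-1}$. Everything else is routine bookkeeping.
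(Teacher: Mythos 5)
Your proof is correct and takes essentially the same approach as the paper: the paper's one-line proof simply appeals to ``the symmetry of all elements in $[n]\setminus\{i,j\}$'', and your conjugation argument (by a $\sigma$ fixing $i$ and $j$ and carrying $(x,y)$ to $(x',y')$, using $(\sigma\pi\sigma^{-1})^k=\sigma\pi^k\sigma^{-1}$) is exactly that symmetry made explicit and rigorous.
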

\begin{proof}
This is clear from the symmetry of all elements in $[n]\setminus\{i,j\}$. 
\end{proof}

Though we do not need it to prove Theorem \ref{descent} and Theorem \ref{inversion}, we record here for completeness that the number of such permutations is 
$$(n^2-(2\tau(k)+3)n+\tau^2(k)+3\tau(k)+\tauo(k)+\sigma(k))(n-4)!.$$
This formula can be obtained either as a corollary of the following series of lemmas, or by counting directly as in the proofs of those lemmas.

% For consecutive values $i,j$, we can deduce the following extension for the symmetry argument of~\cref{independent}.

% \textcolor{red}{the way this proof is written is ok for you, Jun?}

 \begin{lemma}\label{clm:often_half}
    For every $i\in [n-1]$, among all permutations $\pi \in \mathcal S_n$ for which $\{\pi^k(i),\pi^k(i+1)\} \not=\{i,i+1\},$ half of them satisfy $\pi^k(i)>\pi^k(i+1)$ and half of them satisfy $\pi^k(i)<\pi^k(i+1)$.
\end{lemma}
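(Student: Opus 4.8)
The plan is to exhibit an explicit sign-reversing involution on the set of permutations under consideration. Let $t=(i\;\,i{+}1)\in\mathcal{S}_n$ be the transposition swapping $i$ and $i+1$ and fixing all other elements, and consider the conjugation map $\pi\mapsto t\pi t$. Since $t^2=\mathrm{id}$, this is an involution on $\mathcal{S}_n$, and $(t\pi t)^k=t\pi^k t$. Writing $x=\pi^k(i)$ and $y=\pi^k(i+1)$, a direct computation gives $(t\pi t)^k(i)=t\pi^k t(i)=t(y)$ and $(t\pi t)^k(i+1)=t(x)$. The strategy is to show that this involution restricts to a bijection between the permutations satisfying the defining condition with $\pi^k(i)>\pi^k(i+1)$ and those with $\pi^k(i)<\pi^k(i+1)$.

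First I would check that the map preserves the defining condition $\{\pi^k(i),\pi^k(i+1)\}\neq\{i,i+1\}$. Indeed, $\{(t\pi t)^k(i),(t\pi t)^k(i+1)\}=t(\{x,y\})$, and since $t$ fixes the set $\{i,i+1\}$ setwise, we have $t(\{x,y\})=\{i,i+1\}$ if and only if $\{x,y\}=\{i,i+1\}$. Hence a permutation lies in the relevant set precisely when its image under $\pi\mapsto t\pi t$ does.

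Next I would verify that the involution exchanges descents and ascents at position $i$, i.e. that $\pi^k(i)>\pi^k(i+1)$ if and only if $(t\pi t)^k(i)<(t\pi t)^k(i+1)$. This amounts to showing $x>y\iff t(x)>t(y)$ whenever $\{x,y\}\neq\{i,i+1\}$. Since $t$ swaps only the two adjacent values $i$ and $i+1$, it preserves the relative order of any two elements unless both of them lie in $\{i,i+1\}$; the only such case with $x\neq y$ is $\{x,y\}=\{i,i+1\}$, which is exactly the excluded case. The remaining cases (neither of $x,y$ in $\{i,i+1\}$, or exactly one of them in $\{i,i+1\}$) reduce to the elementary observation that $i$ and $i+1$ are consecutive integers, so swapping them leaves their order relative to any third value unchanged.

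Combining these two steps, $\pi\mapsto t\pi t$ maps the permutations with $\{\pi^k(i),\pi^k(i+1)\}\neq\{i,i+1\}$ and $\pi^k(i)>\pi^k(i+1)$ bijectively onto those with the same condition and $\pi^k(i)<\pi^k(i+1)$, which gives the claimed equality. The conceptual crux, and the reason the exceptional pair $\{i,i+1\}$ must be removed, is the order-reversal step: conjugating by $t$ both swaps the two output positions and applies $t$ to the output values, and these two effects combine to flip the comparison precisely because $i$ and $i+1$ are adjacent and $\{x,y\}\neq\{i,i+1\}$. I do not anticipate any serious difficulty beyond organizing this small case analysis cleanly.
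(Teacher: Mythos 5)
Your proof is correct and is essentially the paper's own argument: the paper's bijection, described as ``switching the labels $i$ and $i+1$'', is exactly your conjugation $\pi\mapsto t\pi t$ with $t=(i\;\,i{+}1)$, used for the same sign-reversing purpose. The only organizational difference is that you apply the involution uniformly to the whole set, while the paper first invokes \cref{independent} to dispose of the case where both $\pi^k(i),\pi^k(i+1)$ lie outside $\{i,i+1\}$ and uses the label switch only for the remaining case; your uniform treatment is self-contained and equally valid.
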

\begin{proof}
By~\cref{independent}, it suffices to consider those $\pi\in\mathcal{S}_n$ such that exactly one of $\pi^k(i),\pi^k(i+1)$ is equal to $i$ or $i+1$. This can be proved using~\cref{itoi,itoj} below, but we provide a more direct bijective proof here.
    
For every $x \in [n] \setminus\{i,i+1\}$, switching the labels $i$ and $i+1$ gives a bijection between $\pi \in \mathcal S_n$ for which $(\pi^k(i),\pi^k(i+1))=(i,x)$ and those satisfying $(\pi^k(i),\pi^k(i+1))=(x,i+1)$, and and a bijection between those satisfying $ (\pi^k(i),\pi^k(i+1))=(x,i)$ and those with $( \pi^k(i),\pi^k(i+1))=(i+1,x)$.
Since $x>i$ if and only if $i+1<x$ under our assumption on $x$, these two bijections swaps whether $\pi^k$ has an ascent or descent at position $i$, implying that there are equally many of them having each.
\end{proof}

\begin{lemma}\label{itoi}
For every $y\in[n]\setminus\{i,j\}$, the number of permutations $\pi\in\mathcal{S}_n$ satisfying $\pi^k(i)=i$ and $\pi^k(j)=y$ is $$(\tau(k)n-\tau^2(k)-\sigma(k))(n-3)!.$$
\end{lemma}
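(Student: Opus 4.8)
The plan is to translate the two conditions into statements about the cycle structure of $\pi$. If $i$ lies in a cycle of $\pi$ of length $\ell$, then $\pi^k$ acts on that cycle as a shift by $k\bmod\ell$, so $\pi^k(i)=i$ holds precisely when $\ell\mid k$. Since $n\geq2k+1$, every divisor of $k$ is at most $k<n$, so cycle lengths never conflict with the size of $[n]$. Likewise $\pi^k(j)=y$ with $y\neq j$ forces $j$ to lie in a cycle whose length does not divide $k$ and $y$ to lie in that same cycle; in particular $i$ and $j$ then lie in different cycles.

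First I would reduce to a count that does not mention $y$. I claim the number of $\pi$ with $\pi^k(i)=i$ and $\pi^k(j)=y$ is the same for every $y\in[n]\setminus\{i,j\}$: conjugating $\pi$ by any permutation of $[n]$ fixing $i$ and $j$ replaces $\pi^k$ by its conjugate, preserves $\pi^k(i)=i$, and moves the value $\pi^k(j)$ to its image, which realises the required bijection between different choices of $y$ (this is the same symmetry used in~\cref{independent}). Moreover, whenever $\pi^k(i)=i$, the value $y=\pi^k(j)$ can never equal $i$ (as $\pi^k$ is injective and already fixes $i$), so the condition $y\notin\{i,j\}$ is equivalent to $y\neq j$. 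Hence the desired quantity equals $\frac1{n-2}$ times the number $N$ of permutations with $\pi^k(i)=i$ and $\pi^k(j)\neq j$.

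To compute $N$, I would write $N=A-B$, where $A$ counts $\pi$ with $\pi^k(i)=i$ and $B$ counts those with both $\pi^k(i)=i$ and $\pi^k(j)=j$, using three standard cycle-counting facts: the number of $\pi\in\mathcal S_n$ in which a fixed element lies in a cycle of a prescribed length $\ell$ is $(n-1)!$; the number in which two fixed elements share a cycle of length $\ell$ is $(\ell-1)(n-2)!$; and the number in which two fixed elements lie in distinct cycles of prescribed lengths $\ell_1,\ell_2$ is $(n-2)!$. Summing the first fact over all divisors $\ell\mid k$ gives $A=\tau(k)(n-1)!$. For $B$ I would split according to whether $i,j$ share a cycle: the common-cycle case contributes $\sum_{\ell\mid k}(\ell-1)(n-2)!=(\sigma(k)-\tau(k))(n-2)!$, while the distinct-cycle case contributes $\tau^2(k)(n-2)!$ (one term $(n-2)!$ for each ordered pair of divisors $\ell_1,\ell_2$ of $k$). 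Thus $B=(\tau^2(k)-\tau(k)+\sigma(k))(n-2)!$ and
$$N=\tau(k)(n-1)!-(\tau^2(k)-\tau(k)+\sigma(k))(n-2)!=(\tau(k)n-\tau^2(k)-\sigma(k))(n-2)!.$$
Dividing by $n-2$ yields the claimed value $(\tau(k)n-\tau^2(k)-\sigma(k))(n-3)!$.

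The routine parts are the three cycle-counting facts, which follow from elementary choose-and-arrange arguments, together with the observation that the bounds $\ell\leq k$ and $\ell_1+\ell_2\leq 2k<n$ keep every configuration legitimate. The step requiring the most care is the evaluation of $B$: one must correctly separate the common-cycle and distinct-cycle contributions, remember that a length-$1$ cycle cannot be shared, and carry out the divisor sums $\sum_{\ell\mid k}(\ell-1)=\sigma(k)-\tau(k)$ and the $\tau^2(k)$ count of divisor pairs, since an off-by-one here would corrupt the final formula. The symmetry reduction in the second paragraph is the other conceptual pivot, as it is precisely what lets us avoid tracking the exact position of $y$ within $j$'s cycle.
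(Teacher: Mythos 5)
Your proof is correct, but it takes a genuinely different route from the paper. The paper counts directly: it fixes the cycle containing $i$ (length $d\mid k$) and the cycle containing $j$ and $y$, and performs a case analysis on the distance $t$ from $j$ to $y$ in that cycle, splitting into $t=k$ (forcing cycle length $\ell\geq k+1$, giving $\sum_{d\mid k}(n-d-k)(n-3)!$ permutations) and $t<k$ (where $k\equiv t\pmod\ell$ forces $\ell\leq k$, $\ell\nmid k$, giving $\sum_{d\mid k}\sum_{\ell\nmid k}(n-3)!$ permutations). You instead average over $y$: conjugation by permutations fixing $i$ and $j$ shows the count is independent of $y\in[n]\setminus\{i,j\}$ (a variant of \cref{independent} with the first coordinate pinned to $i$, which your conjugation argument correctly justifies), and injectivity of $\pi^k$ rules out $y=i$, so the answer is $\frac1{n-2}(A-B)$ where $A=\tau(k)(n-1)!$ counts $\pi$ with $\pi^k(i)=i$ via the uniform distribution of the cycle length of a fixed element, and $B=(\tau^2(k)-\tau(k)+\sigma(k))(n-2)!$ counts $\pi$ fixing both $i$ and $j$ under $\pi^k$ --- which is exactly the paper's \cref{ijfixed}, proved there by the same split into shared-cycle and distinct-cycle cases you use. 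Your approach buys a cleaner argument with no congruence bookkeeping inside cycles, and it exposes a logical dependence the paper does not exploit (\cref{itoi} as a corollary of \cref{ijfixed} plus symmetry); the same averaging trick would in fact also recover \cref{itoj} from \cref{ijexchange}. What the paper's direct method buys is uniformity: the same in-cycle distance analysis is the engine of \cref{itoj,ijexchange}, where the target values lie inside the cycles and no averaging shortcut is as immediate. Your boundary checks ($\ell\leq k<n$ and $\ell_1+\ell_2\leq2k<n$) are the right ones and hold under the hypothesis $n\geq2k+1$.
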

\begin{proof}
Let $d$ be the length of the cycle that $i$ belongs to in $\pi$, then $d\mid k$ as $\pi^k(i)=i$. Note that $j$ cannot be in the same cycle as $i$, as otherwise $\pi^k(j)=j$. Let $\ell$ be the length of the cycle that $j$ belongs to, and observe that $y$ must be in this cycle as well. Let $1\leq t\leq \min\{k,\ell-1\}$ be the distance from $j$ to $y$ in this cycle, or equivalently the smallest positive integer such that $\pi^t(j)=y$. 

If $t=k$, then $\ell\geq k+1$. On the other hand, for any $\pi\in\mathcal{S}_n$, $d\mid k$ and $k+1\leq\ell\leq n-d$, such that $i$ is in a length $d$ cycle and $j,y$ are in another length $\ell$ cycle with the distance from $j$ to $y$ on this cycle being $k$, we have $\pi^k(i)=i$ and $\pi^k(j)=y$. There are $\sum_{d\mid k}(n-d-k)(n-3)!$ permutations of this form.

If $t<k$, then as $\pi^k(j)=y$, we must have $k\equiv t\pmod\ell$, and so $\ell\leq k$. Moreover, if $\ell\mid k$, then $t=0$, which is not allowed. Conversely, for any $d\mid k$ and every $\ell\in[k]$ not dividing $k$, there is exactly one choice of $t\in[\ell-1]$ satisfying $k\equiv t\pmod\ell$. For any $\pi\in\mathcal{S}_n$ such that $i$ is in a length $d$ cycle and $j,y$ are in another length $\ell$ cycle with the distance from $j$ to $y$ on this cycle being $t$, we have $\pi^k(i)=i$ and $\pi^k(j)=y$. There are $\sum_{d\mid k}\sum_{\ell\nmid k}(n-3)!$ permutations of this form. 

Therefore, the number of permutations $\pi\in\mathcal{S}_n$ satisfying $\pi^k(i)=i$ and $\pi^k(j)=y$ is 
\begin{align*}
\sum_{d\mid k}(n-d-k)(n-3)!+\sum_{d\mid k}\sum_{\ell\nmid k}(n-3)!&=(\tau(k)n-\sigma(k)-k\tau(k)+\tau(k)(k-\tau(k)))(n-3)!\\
&=(\tau(k)n-\tau^2(k)-\sigma(k))(n-3)!,
\end{align*}
as required.
\end{proof}

\begin{lemma}\label{itoj}
For every $y\in[n]\setminus\{i,j\}$, the number of permutations $\pi\in\mathcal{S}_n$ satisfying $\pi^k(i)=j$ and $\pi^k(j)=y$ is $$(n-\tau(k)-\tauo(k))(n-3)!.$$
\end{lemma}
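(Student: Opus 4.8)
The plan is to exploit the cycle structure of $\pi$, exactly as in the proof of \cref{itoi}, but with one key structural difference: here the three marked elements $i,j,y$ are forced onto a \emph{single} cycle. Indeed, since $\pi^k(i)=j\neq i$, the elements $i$ and $j$ lie on a common cycle of $\pi$; and since $\pi^k(j)=y$, the element $y$ lies on that same cycle. Thus $i,j,y$ all belong to one cycle, of some length $\ell\geq 3$. I would write this cycle starting at $i$, say $(i=c_0,c_1,\dots,c_{\ell-1})$, which encodes the cycle bijectively and hence introduces no overcounting.

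Reading off positions, $\pi^k(i)=c_{k\bmod\ell}$ and $\pi^k(j)=c_{2k\bmod\ell}$, so the requirements $\pi^k(i)=j$ and $\pi^k(j)=y$ force $j=c_{k\bmod\ell}$ and $y=c_{2k\bmod\ell}$; that is, the positions of $j$ and $y$ in the cycle are completely determined by $\ell$ and $k$. For $i,j,y$ to be genuinely distinct I need the three positions $0$, $k\bmod\ell$, and $2k\bmod\ell$ to be pairwise distinct, which amounts to $\ell\nmid k$ and $\ell\nmid 2k$; since $\ell\mid k$ implies $\ell\mid 2k$, these two conditions collapse to the single requirement $\ell\nmid 2k$. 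For each admissible $\ell$, the remaining $\ell-3$ slots of the cycle may be filled by an ordered choice of $\ell-3$ of the $n-3$ elements outside $\{i,j,y\}$, after which the leftover $n-\ell$ elements are permuted arbitrarily; this yields $\frac{(n-3)!}{(n-\ell)!}\cdot(n-\ell)!=(n-3)!$ permutations, independent of $\ell$.

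It then remains to count the admissible cycle lengths $\ell\in\{3,\dots,n\}$ with $\ell\nmid 2k$. Because $n\geq 2k+1$, every divisor of $2k$ lies in $[1,n]$, so exactly $\tau(2k)$ values of $\ell\in[1,n]$ divide $2k$; and since both $1$ and $2$ divide $2k$, none of the non-divisors equals $1$ or $2$, so the number of admissible $\ell$ is $n-\tau(2k)$. Multiplying by $(n-3)!$ gives $(n-\tau(2k))(n-3)!$. Finally, writing $k=2^{\nu_2(k)}m$ with $m$ odd yields $\tau(2k)=(\nu_2(k)+2)\tau(m)=(\nu_2(k)+1)\tau(m)+\tau(m)=\tau(k)+\tauo(k)$, which rewrites the answer in the claimed form. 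I expect the only delicate points to be verifying that the distinctness conditions really do collapse to $\ell\nmid 2k$ and checking this last divisor identity; the rest is a routine cycle count mirroring \cref{itoi}.
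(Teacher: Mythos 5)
Your proof is correct. It shares the paper's underlying framework---$i,j,y$ must lie on a single cycle of $\pi$, and each admissible cycle length $\ell$ contributes exactly $(n-3)!$ permutations---but your enumeration of the admissible lengths is organised genuinely differently, and more cleanly. The paper parametrises by the distance $t$ from $i$ to $j$ along the cycle (so $k\equiv t\pmod\ell$, and $t$ is also the distance from $j$ to $y$), splits into the cases $t=k$ and $t<k$, and in the latter case must separately exclude those $\ell$ for which $2k$ is an odd multiple of $\ell$ (equivalently $\ell=2t$, which would force $y=i$); counting these exclusions via the factorisation of $k$ gives $\tauo(k)-1$ values, and the two cases sum to $(n-k-1)+(k-\tau(k)-\tauo(k)+1)=n-\tau(k)-\tauo(k)$. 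You instead observe that the positions of $j$ and $y$ on the cycle are forced to be $k\bmod\ell$ and $2k\bmod\ell$, so admissibility collapses to the single condition $\ell\nmid 2k$ (your reduction is sound: pairwise distinctness of $0$, $k\bmod\ell$, $2k\bmod\ell$ amounts to $\ell\nmid k$ and $\ell\nmid 2k$, and the first condition follows from the second); since $n\ge 2k+1$ places every divisor of $2k$ inside $[1,n]$, and $1,2$ are among them, exactly $n-\tau(2k)$ lengths in $\{3,\dots,n\}$ are admissible, and the identity $\tau(2k)=\tau(k)+\tauo(k)$ gives the stated form. What your route buys is an observation absent from the paper: the answer is simply $n-\tau(2k)$, with the paper's case split and its ``odd multiple'' analysis subsumed by the one divisibility condition $\ell\nmid 2k$; the same lens also illuminates \cref{ijexchange}, whose answer $\tauo(k)=\tau(2k)-\tau(k)$ counts the divisors of $2k$ that do not divide $k$. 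What the paper's route keeps is the explicit distance parameter $t$, which makes its proofs of \cref{itoi,itoj,ijexchange} structurally parallel to one another.
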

\begin{proof}
From assumption, $i,j,y$ are in the same cycle of $\pi$. Let $\ell$ be the length of this cycle, and let $1\leq t\leq\min\{\ell-1,k\}$ be the distance from $i$ to $j$ on this cycle, which is also the smallest positive integer such that $\pi^t(i)=j$. It follows that $k\equiv t\pmod\ell$, and so $t$ must be the distance from $j$ to $y$ on this cycle as well. In particular, $\ell\not=2t$, as otherwise $i=\pi^\ell(i)=\pi^{2t}(i)=\pi^t(j)=y$, contradiction. 

If $t=k$, then $\ell\geq k+1$ and $\ell\not=2k$. On the other hand, for any $\ell\geq k+1$ and not equal to $2k$, and any permutation $\pi\in\mathcal{S}_n$ with a cycle of length $\ell$ containing $i,j,y$, such that the distance in this cycle from $i$ to $j$ and from $j$ to $y$ are both $k$, we have $\pi^k(i)=j$ and $\pi^k(j)=y$. There are $(n-k-1)(n-3)!$ permutations of this form.

If $t<k$, we must have $\ell<k$ as $k\equiv t\pmod\ell$. If $\ell\mid k$, then $t=0$, which is not allowed. Note that given $k\equiv t\pmod\ell$, then $\ell=2t$, which is also not allowed from above, implies that $2k$ is an odd multiple of $\ell$. Conversely, for any $\ell\in[k]$ not dividing $k$, and such that $2k$ is not an odd multiple of $\ell$, we have that $t\in[\ell-1]$ given by $t\equiv k\pmod\ell$ is not equal to $\frac12\ell$. Moreover, for any permutation $\pi\in\mathcal{S}_n$ with a cycle of length $\ell$ containing $i,j,y$, such that the distance in this cycle from $i$ to $j$ and from $j$ to $y$ are both $t$, we have $\pi^k(i)=j$ and $\pi^k(j)=y$. Let $k=2^ab$, where $a,b$ are positive integers and $2\nmid b$. If $\ell\in[k]$, $\ell\nmid k$ and $2k$ is an odd multiple of $\ell$, then $\ell$ must be of the form $2^{a+1}d$, where $d$ is a proper divisor of $b$. As the converse of this is also true, the number of such $\ell$ is then equal to $\tau(b)-1=\tauo(k)-1$. Hence, the number of the permutations of this form is $(k-\tau(k)-\tauo(k)+1)(n-3)!$.

Therefore, the number of permutations $\pi\in\mathcal{S}_n$ satisfying $\pi^k(i)=j$ and $\pi^k(j)=y$ is
$$((n-k-1)+(k-\tau(k)-\tauo(k)+1))(n-3)!=(n-\tau(k)-\tauo(k))(n-3)!,$$
as required.
\end{proof}

\begin{lemma}\label{ijfixed}
The number of permutations $\pi\in\mathcal{S}_n$ satisfying $\pi^k(i)=i$ and $\pi^k(j)=j$ is $$(\tau^2(k)-\tau(k)+\sigma(k))(n-2)!.$$
\end{lemma}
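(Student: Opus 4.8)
The plan is to classify the permutations $\pi$ with $\pi^k(i)=i$ and $\pi^k(j)=j$ according to whether $i$ and $j$ lie in the same cycle of $\pi$. The governing observation is that $\pi^k$ fixes an element precisely when that element lies in a cycle whose length divides $k$; hence the two conditions together are equivalent to requiring that both $i$ and $j$ sit in cycles of length dividing $k$. So I will count, in each of the two cases, how many $\pi\in\mathcal{S}_n$ realise such a cycle structure, and then add the contributions.

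First I would handle the case where $i$ and $j$ lie in \emph{distinct} cycles. Fixing an ordered pair $(d_1,d_2)$ of divisors of $k$, I count the $\pi$ for which $i$ lies in a cycle of length $d_1$ and $j$ in a different cycle of length $d_2$: choose and cyclically order the $d_1-1$ remaining entries of $i$'s cycle from $[n]\setminus\{i,j\}$, giving $(n-2)(n-3)\cdots(n-d_1)=\frac{(n-2)!}{(n-1-d_1)!}$ choices; then choose and order the $d_2-1$ remaining entries of $j$'s cycle from what is left, giving $\frac{(n-1-d_1)!}{(n-d_1-d_2)!}$ choices; then permute the remaining $n-d_1-d_2$ elements arbitrarily. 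The factorials telescope, so this count equals exactly $(n-2)!$, independent of $d_1$ and $d_2$. (Here the hypothesis $n\geq 2k+1$ guarantees $d_1+d_2\leq 2k<n$, so there is always room.) Summing over the $\tau^2(k)$ ordered pairs of divisors yields $\tau^2(k)\,(n-2)!$.

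Next I would treat the case where $i$ and $j$ lie in a \emph{common} cycle, necessarily of some length $\ell\mid k$ with $\ell\geq2$. For each such $\ell$, the number of cycles of length $\ell$ containing both $i$ and $j$ is $\binom{n-2}{\ell-2}(\ell-1)!$ (choose the other $\ell-2$ members, then cyclically order all $\ell$ of them), and the remaining $n-\ell$ elements are permuted freely; simplifying $\binom{n-2}{\ell-2}(\ell-1)!(n-\ell)!$ gives $(\ell-1)(n-2)!$. Summing over $\ell\mid k$ with $\ell\geq2$, and using $\sum_{\ell\mid k}(\ell-1)=\sigma(k)-\tau(k)$ (the omitted $\ell=1$ term contributes $0$), produces $(\sigma(k)-\tau(k))(n-2)!$.

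Adding the two contributions gives $\bigl(\tau^2(k)-\tau(k)+\sigma(k)\bigr)(n-2)!$, as claimed. The argument is essentially bookkeeping, and the one step deserving care is the clean telescoping in the distinct-cycle case, which is what makes the per-pair count independent of the chosen divisors and avoids any $n$-dependent case analysis. As an alternative route one could instead sum~\cref{itoi} over the $n-2$ admissible values of $y$ and subtract from the total count $\tau(k)\,(n-1)!$ of permutations with $\pi^k(i)=i$; this reproduces the same formula, but the direct cycle count above seems cleaner and more in keeping with the proofs of the preceding lemmas.
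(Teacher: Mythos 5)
Your proof is correct and follows essentially the same route as the paper's: split into the cases where $i$ and $j$ lie in distinct cycles (of lengths $d_1,d_2\mid k$) or a common cycle (of length $\ell\mid k$), count $(n-2)!$ permutations per ordered pair of divisors in the first case and $(\ell-1)(n-2)!$ in the second, and add. The only difference is that you spell out the telescoping product and the role of $n\geq 2k+1$, which the paper leaves implicit.
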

\begin{proof}
First suppose $i$ and $j$ are in distinct cycles of $\pi$ of length $d_1$ and $d_2$, respectively. From assumption, we must have $d_1,d_2\mid k$. On the other hand, for any $d_1,d_2\mid k$ and any $\pi\in\mathcal{S}_n$ with $i$ in a cycle of length $d_1$, and $j$ in another cycle of length $d_2$, we have $\pi^k(i)=i$ and $\pi^k(j)=j$. There are $\tau^2(k)(n-2)!$ permutations of this form. 

Now suppose $i$ and $j$ are in the same cycle of $\pi$ of length $d$. Again, $d\mid k$ from assumption. Conversely, for any $d\mid k$, and any $\pi\in\mathcal{S}_n$ with $i,j$ in the same cycle of length $d$, we have $\pi^k(i)=i$ and $\pi^k(j)=j$. Since the distance from $i$ to $j$ in this cycle can be any number in $[d-1]$, there are $\sum_{d\mid k}(d-1)(n-2)!=(\sigma(k)-\tau(k))(n-2)!$ permutations of this form. 

Therefore, the number of permutations $\pi\in\mathcal{S}_n$ satisfying $\pi^k(i)=i$ and $\pi^k(j)=j$ is $$\tau^2(k)(n-2)+(\sigma(k)-\tau(k))(n-2)!=(\tau^2(k)-\tau(k)+\sigma(k))(n-2)!,$$ as required.
\end{proof}

\begin{lemma}\label{ijexchange}
The number of permutations $\pi\in\mathcal{S}_n$ satisfying $\pi^k(i)=j$ and $\pi^k(j)=i$ is $$\tauo(k)(n-2)!.$$
\end{lemma}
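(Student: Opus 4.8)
The plan is to reuse the cycle-structure analysis from the proof of \cref{itoj}, specialized to the case where $\pi^k$ swaps $i$ and $j$. First I would observe that since $\pi^k(i)=j$, the elements $i$ and $j$ must lie on a common cycle of $\pi$; let $\ell$ denote its length and let $t$ with $1\le t\le\ell-1$ be the distance from $i$ to $j$ along this cycle, i.e. the least positive integer with $\pi^t(i)=j$. The two defining conditions then become congruences: $\pi^k(i)=j$ forces $k\equiv t\pmod\ell$, while $\pi^k(j)=i$ forces $k\equiv\ell-t\pmod\ell$, since the distance from $j$ back to $i$ around the cycle is $\ell-t$.

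Next I would combine these to pin down the geometry. Subtracting the two congruences gives $2t\equiv0\pmod\ell$, and since $1\le t\le\ell-1$ the only possibility is $t=\ell/2$; in particular $\ell$ is even and $j$ sits antipodally to $i$ on the cycle. Feeding $t=\ell/2$ back into $k\equiv t\pmod\ell$ shows that the single surviving constraint is $k\equiv\ell/2\pmod\ell$, equivalently that $2k$ is an odd multiple of $\ell$. Thus the admissible $\pi$ are precisely those whose cycle through $i$ has some even length $\ell$ with $2k$ an odd multiple of $\ell$, and with $j$ placed at distance $\ell/2$ from $i$.

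The count then factors into two independent parts. For the number of admissible lengths $\ell$: writing $k=2^ab$ with $b$ odd and $a=\nu_2(k)$, the requirement that $2k=2^{a+1}b$ be an odd multiple of $\ell$ is equivalent to $\ell=2^{a+1}d$ for some divisor $d\mid b$, so the admissible lengths biject with the divisors of $b$ and number $\tau(b)=\tauo(k)$. For each fixed admissible $\ell$, I would count directly: the positions of $i$ and $j$ on the cycle are forced (with $j$ at distance $\ell/2$), the remaining $\ell-2$ cycle slots are filled by an ordered selection from the other $n-2$ elements in $\frac{(n-2)!}{(n-\ell)!}$ ways, and the leftover $n-\ell$ elements are permuted freely in $(n-\ell)!$ ways, for a total of $(n-2)!$ that is independent of $\ell$. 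Since the cycle length of $i$ is determined by $\pi$, these classes are disjoint, and summing over the $\tauo(k)$ admissible lengths yields $\tauo(k)(n-2)!$.

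The main obstacle is the number-theoretic bookkeeping that produces exactly $\tauo(k)$: one must verify the equivalence ``$2k$ is an odd multiple of $\ell$'' $\Longleftrightarrow$ ``$\ell=2^{a+1}d$ with $d\mid b$'', and, crucially, remember to include the extremal length $\ell=2k$ (the case $d=b$). This is precisely the term that was \emph{excluded} in \cref{itoj}, where $\ell=2k$ would have forced $y=i$; its inclusion here accounts for the discrepancy between the $\tauo(k)-1$ obtained there and the $\tauo(k)$ obtained now. I would also record the range check: the largest admissible length is $\ell=2k\le n-1<n$ by the hypothesis $n\ge2k+1$, so every admissible cycle fits inside $[n]$ and the per-$\ell$ count $(n-2)!$ is valid.
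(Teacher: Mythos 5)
Your proposal is correct and follows essentially the same route as the paper's proof: both reduce to $i,j$ lying antipodally on a common cycle of even length $\ell$ with $k\equiv\ell/2\pmod\ell$, count $\tauo(k)$ admissible lengths via the factorization $k=2^ab$, and multiply by $(n-2)!$ per length. The only cosmetic differences are that you phrase the divisibility condition on $\ell$ (``$2k$ is an odd multiple of $\ell$'') where the paper phrases it on $t=\ell/2$ (``$t\mid k$ and $2t\nmid k$''), and that you spell out the per-length count $(n-2)!$ explicitly.
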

\begin{proof}
From assumption, $i$ and $j$ are in the same cycle in $\pi$. Let $\ell$ be the length of this cycle. Let $1\leq t\leq\min\{k,\ell-1\}$ be minimal so that $\pi^t(i)=j$, or equivalently $t$ is the distance from $i$ to $j$ in the cycle, then $t\equiv k\pmod\ell$. It also follows that $\pi^{\ell-t}(j)=i$, so $\ell-t\equiv -t\equiv k\pmod\ell$. Thus, $t\equiv -t\pmod\ell$, and since $t\in[\ell-1]$, we must have $\ell=2t$. Since $\pi^{2k}(i)=\pi^k(j)=i$, we must have $\ell\mid2k$ and so $t\mid k$. Also, $2t\nmid k$ as otherwise $\ell\mid k$ and $\pi^{k}(i)=i$.

Conversely, for any $t\geq1$ satisfying $t\mid k$ and $2t\nmid k$, we have $k\equiv t\pmod {2t}$. So for any $\pi\in\mathcal{S}_n$ containing a cycle of length $2t$, in which $i$ and $j$ are distance $t$ apart, we have $\pi^k(i)=j$ and $\pi^k(j)=i$. 

Therefore, the number of such permutations in $\mathcal{S}_n$ is exactly $(n-2)!$ times the number of divisors $t$ of $k$ such that $2t\nmid k$. Let $k=2^ab$, where $a,b$ are positive integers and $2\nmid b$. Then, every divisor $t$ of $k$ satisfying $2t\nmid k$ is of the form $2^ad$, where $d$ is a divisor of $b$, and the converse is true as well. Thus, there are exactly $\tau(b)=\tauo(k)$ such divisors, which proves that there are exactly $\tauo(k)(n-2)!$ permutations $\pi\in\mathcal{S}_n$ satisfying $\pi^k(i)=j$ and $\pi^k(j)=i$. 
\end{proof}

We now combine these lemmas to prove~\cref{descent,inversion}. We prove~\cref{inversion} first as its proof contains most of what we need to prove~\cref{descent}.

\begin{proof}[Proof of Theorem \ref{inversion}]
For each $\pi\in\mathcal{S}_n$, let $\ninv(\pi)=\frac{n(n-1)}{2}-\inv(\pi)$ be the number of pairs of indices $i<j$ in $[n]$ satisfying $\pi(i)<\pi(j)$. We call each pair $i<j$ of this form a \textit{non-inversion} of $\pi$. Note that to prove Theorem \ref{inversion}, it suffices to show that 
$$\sum_{\pi\in\mathcal{S}_n}(\ninv(\pi^k)-\inv(\pi^k))=\frac{(\tau(k)-1)n\cdot n!}3+\frac{(\tau^2(k)-\tau(k)-\tauo(k)+\sigma(k))n!}{6}.$$

Let $\pi\in\mathcal{S}_n$ and $i<j$ in $[n-1]$ satisfy $\pi^k(i)=x$ and $\pi^k(j)=y$. We consider all possible types of ways we can have an inversion or non-inversion on the pair $i<j$, depending on the values of $x$ and $y$.

\textbf{Type 1.} $x,y\not\in\{i,j\}$. By Lemma \ref{independent}, it is equally likely to have $x<y$ or $x>y$, so inversions and non-inversions of this type cancel out.

\textbf{Type 2.} $x=i$ and $y\not\in\{i,j\}$. By Lemma \ref{itoi}, there are $(i-1)(\tau(k)n-\tau^2(k)-\sigma(k))(n-3)!$ ways to have an inversion of this type as $y$ can take any value in $[i-1]$, and $(n-i-1)(\tau(k)n-\tau^2(k)-\sigma(k))(n-3)!$ ways for a non-inversion as $y$ can take any value in $[n]\setminus([i]\cup\{j\})$.

\textbf{Type 3.} $x=j$ and $y\not\in\{i,j\}$. By Lemma \ref{itoj}, there are $(j-2)(n-\tau(k)-\tauo(k))(n-3)!$ ways to have an inversion of this type, and $(n-j)(n-\tau(k)-\tauo(k))(n-3)!$ ways for a non-inversion.

\textbf{Type 4.} $x\not\in\{i,j\}$ and $y=j$. By Lemma \ref{itoi}, there are $(n-j)(\tau(k)n-\tau^2(k)-\sigma(k))(n-3)!$ ways to have an inversion of this type, and $(j-2)(\tau(k)n-\tau^2(k)-\sigma(k))(n-3)!$ ways for a non-inversion. 

\textbf{Type 5.} $x\not\in\{i,j\}$ and $y=i$. By Lemma \ref{itoj}, there are $(n-i-1)(n-\tau(k)-\tauo(k))(n-3)!$ ways to have an inversion of this type, and $(i-1)(n-\tau(k)-\tauo(k))(n-3)!$ ways for a non-inversion.

\textbf{Type 6.} $x=i,y=j$. By Lemma \ref{ijfixed}, there are $(\tau^2(k)-\tau(k)+\sigma(k))(n-2)!$ ways for this to happen, each resulting in a non-inversion.

\textbf{Type 7.} $x=j,y=i$. By Lemma \ref{ijexchange}, there are $\tauo(k)(n-2)!$ ways for this to happen, each resulting in an inversion.    

Summing from Type 2 to Type 7, the total number of ways to have a non-inversion at $i<j$ of these types is
$$((n-i+j-3)(\tau(k)n-\tau^2(k)-\sigma(k))+(n+i-j-1)(n-\tau(k)-\tauo(k)))(n-3)!+(\tau^2(k)-\tau(k)+\sigma(k))(n-2)!,$$
while the total number of ways to have an inversion at $i<j$ of these types is
$$((n+i-j-1)(\tau(k)n-\tau^2(k)-\sigma(k))+(n-i+j-3)(n-\tau(k)-\tauo(k)))(n-3)!+\tauo(k)(n-2)!.$$
The difference of the first two terms of the two expressions above, summed over all $i<j$ in $[n]$, is
\begin{align*}
&\phantom{==}\sum_{i=1}^{n-1}\sum_{j=i+1}^n2(j-i-1)((\tau(k)n-\tau^2(k)-\sigma(k))-(n-\tau(k)-\tauo(k)))(n-3)!\\
&=((\tau(k)-1)n-\tau^2(k)-\sigma(k)+\tau(k)+\tauo(k))(n-3)!\sum_{i=1}^{n-1}\sum_{j=i+1}^n2(j-i-1)\\
&=((\tau(k)-1)n-\tau^2(k)-\sigma(k)+\tau(k)+\tauo(k))(n-3)!\frac{n(n-1)(n-2)}3\\
&=((\tau(k)-1)n-\tau^2(k)-\sigma(k)+\tau(k)+\tauo(k))\frac{n!}3.
\end{align*}
The difference of the last term of the two expressions above, again summed over all $i<j$ in $[n]$, is
$$(\tau^2(k)-\tau(k)+\sigma(k)-\tauo(k))(n-2)!\frac{n(n-1)}{2}=(\tau^2(k)-\tau(k)+\sigma(k)-\tauo(k))\frac{n!}{2}.$$
Hence,
\begin{align*}
&\phantom{=}\sum_{\pi\in\mathcal{S}_n}(\ninv(\pi^k)-\inv(\pi^k))\\
&=((\tau(k)-1)n-\tau^2(k)-\sigma(k)+\tau(k)+\tauo(k))\frac{n!}3+(\tau^2(k)-\tau(k)+\sigma(k)-\tauo(k))\frac{n!}{2}\\
&=\frac{(\tau(k)-1)n\cdot n!}3+\frac{(\tau^2(k)-\tau(k)-\tauo(k)+\sigma(k))n!}{6},
\end{align*}
as required.
\end{proof}

\begin{proof}[Proof of Theorem \ref{descent}]
For each $\pi\in\mathcal{S}_n$, let $\asc(\pi)=n-1-\des(\pi)$ be the number of indices $i\in[n-1]$ satisfying $\pi(i)<\pi(i+1)$. We say that $\pi$ has an \textit{ascent} at $i$ in these situations. Note that to prove Theorem \ref{descent}, it suffices to show that 
$$\sum_{\pi\in\mathcal{S}_n}(\asc(\pi^k)-\des(\pi^k))=(\tau^2(k)-\tau(k)-\tauo(k)+\sigma(k))(n-1)!.$$

Let $\pi\in\mathcal{S}_n$ and $i\in[n-1]$ satisfy $\pi^k(i)=x$ and $\pi^k(i+1)=y$. Setting $j=i+1$ in Type 1 to Type 7 in the proof of Theorem \ref{inversion} above, we see that if $x,y\not\in\{i,i+1\}$, descents and ascents of Type 1 cancel out, and  by summing from Type 2 to Type 7, the total number of ways to have an ascent at $i$ of those types is
$$(\tau(k)n-\tau^2(k)-\sigma(k))(n-2)!+(n-\tau(k)-\tauo(k))(n-2)!+(\tau^2(k)-\tau(k)+\sigma(k))(n-2)!,$$
while the total number of ways to have an descent at $i$ of those types is
$$(\tau(k)n-\tau^2(k)-\sigma(k))(n-2)!+(n-\tau(k)-\tauo(k))(n-2)!+\tauo(k)(n-2)!.$$
Hence, 
\begin{align*}
\sum_{\pi\in\mathcal{S}_n}(\asc(\pi^k)-\des(\pi^k))&=\sum_{i=1}^{n-1}(\tau^2(k)-\tau(k)+\sigma(k)-\tauo(k))(n-2)!\\
&=(\tau^2(k)-\tau(k)-\tauo(k)+\sigma(k))(n-1)!,
\end{align*}
as required.
\end{proof}

Note that as both $\tau^2(k)-\tau(k)$ and $\tauo(k)-\sigma(k)$ are even, $\frac12(\tau^2(k)-\tau(k)-\tauo(k)+\sigma(k))$ is an integer. In the case when $k=p$ is an odd prime, the formula for the expectation of $\des(\pi^p)$ simplifies to $\frac{n-1}{2}-\frac{p+1}{2n}$, while the one for $\inv(\pi^p)$ simplifies to $\frac{n(n-1)}{4}-\frac n6-\frac{p+1}{12}$.

Finally, we remark that \cref{descent} is actually valid for every $n\ge k+\ell(k)$, where $\ell(k)$ is defined to be the largest proper divisor of $k$. By~\cref{clm:often_half}, we only need to compare permutations counted in~\cref{ijfixed,ijexchange} when $j=i+1$.
There are only two situations there in which we considered a union of cycles with total length at least $k+\ell(k)$: the disjoint union of two cycles of length $k$, one containing $i$ and the other containing $i+1$ in~\cref{ijfixed}, and a cycle of length $2k$ with $i$ and $i+1$ being diametrically opposite in~\cref{ijexchange}. The former contributes $(n-2)!$ ascents while the latter $(n-2)!$ descents, which cancel out. Thus, the formula is valid for every $n\ge k+\ell(k).$

% We can also prove it for the stronger bound $n \ge k+\ell(k)$ with the following alternative proof.

% \begin{proof}[Alternative proof of Theorem \ref{descent}]

\section{On Grassmanian permutations $\pi$ with $\des(\pi^k) \in \{0,1\}$}\label{sec:reldir}

In this section, we show that a permutation being Grassmanian is so rare, that both $\pi$ and $\pi^k$ being so implies a lot of structure, and so we can count the number of them precisely. By~\cite[Lem.~2.2]{AG}, it is sufficient to determine the number of such permutations for which $\pi(1) \not=1$ or $\pi(n) \not=n$, as those satisfying $\pi(1)=1$ or $\pi(n)=n$ can be counted recursively.

We begin with a technical lemma that we will use shortly to prove Theorem \ref{thr:twiceGrassmanian}.
\begin{lemma}\label{lemma:k-1isid}
Let $k\geq3$, $i,j\in[n-1]$. Suppose $\pi,\pi^k$ are both Grassmanian permutations with $\pi(i)=\pi^k(j)=n$, $\pi(i+1)=\pi^k(j+1)=1$. If there exists some $0\leq t\leq i-1$, such that $\pi(i-\ell)=\pi^k(j-\ell)=n-\ell$ for all $\ell\in[t]$, and $\pi(n)=\pi^k(n)=n-t-1$, then $i=j$ and $\pi^{k-1}=\text{id}$.
\end{lemma}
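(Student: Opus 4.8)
The plan is to reduce everything to the single statement $\pi=\pi^k$: this already yields both conclusions, since cancelling one factor of $\pi$ gives $\pi^{k-1}=\text{id}$, and then $\pi^k(i)=\pi(i)=n=\pi^k(j)$ forces $i=j$ by injectivity. Before doing anything, I would record the rigid structure the hypotheses impose. As $\pi$ is Grassmanian with $\pi(i)=n>1=\pi(i+1)$, its unique descent is at $i$, so $\pi$ is increasing on $[1,i]$ and on $[i+1,n]$; likewise $\pi^k$ is increasing on $[1,j]$ and on $[j+1,n]$. Hence each of $\pi,\pi^k$ is determined by its descent position together with its set of first-block values $A=\{\pi(1),\dots,\pi(i)\}$, each block being written in increasing order. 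The equality $\pi(n)=\pi^k(n)=n-t-1$ says the largest second-block value is $n-t-1$, so the top values $n-t,\dots,n$ all lie in the first block and, by monotonicity, occupy its last $t+1$ positions in increasing order; this is exactly the stated staircase, and it shows the staircase hypothesis is essentially automatic once $\pi(n)=\pi^k(n)$ is known. Thus proving $\pi=\pi^k$ amounts to proving $i=j$ together with the equality of the two first-block value sets.

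Next I set $\sigma=\pi^{k-1}$ and exploit that $\sigma$ commutes with $\pi$. Writing $\pi^k=\pi\circ\sigma$, the hypotheses translate into $\sigma(j)=\pi^{-1}(n)=i$, $\sigma(j-\ell)=i-\ell$ for $0\le\ell\le t$, $\sigma(j+1)=i+1$, and $\sigma(n)=n$ (the last from $\pi(\sigma(n))=\pi^k(n)=\pi(n)$). Commutativity then pins down $i=j$: since $\sigma(\pi(j))=\pi(\sigma(j))=\pi(i)=n=\sigma(n)$, injectivity of $\sigma$ gives $\pi(j)=n$, whence $j=i$. Consequently $\sigma$ fixes each of $i-t,\dots,i,i+1,n$, and therefore — again using that $\sigma$ commutes with $\pi$ — it fixes the entire $\pi$-cycle through each of these elements. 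Separately, writing instead $\pi^k=\sigma\circ\pi$: for $p\in[1,i]$ we have $\pi^k(p)=\sigma(\pi(p))$ increasing in $p$ while $\pi(p)$ increases through $A$, so $\sigma$ is increasing on $A$; the same argument on $[i+1,n]$ shows $\sigma$ is increasing on $B:=[n]\setminus A$.

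It now remains to prove $\sigma(A)=A$, for then $\sigma(B)=B$ as well, and the monotonicity of $\sigma$ on each block upgrades this to $\sigma=\text{id}$ (an increasing bijection of a finite totally ordered set is the identity), giving $\pi^{k-1}=\text{id}$. Equivalently, I must show $\sigma$ never transports a first-block value of $\pi$ to a second-block value, or the reverse; equivalently, every $\pi$-cycle is fixed by $\sigma$. I expect this to be the main obstacle. The difficulty is that the two increasing blocks of $\pi$ share the same low range $\{1,\dots,n-t-2\}$, so the monotonicity of $\sigma$ together with the boundary fixed points found above do not by themselves prevent $\sigma$ from swapping a low first-block value with a low second-block value; indeed these constraints alone are consistent with such a crossing, so the single-descent property of $\pi^k$ must be used essentially. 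My approach would be a downward (extremal) induction: take the largest moved element $m^\ast$, use that everything above it is fixed to locate the element $y<m^\ast$ with $\sigma(y)=m^\ast$ in the opposite block, and show that this block-crossing forces a second descent in $\pi^k=\pi\circ\sigma$, contradicting that $\pi^k$ is Grassmanian. Extracting that forbidden descent from a minimal crossing, while tracking precisely where $\pi$ sends the two values involved, is the technical heart of the proof.
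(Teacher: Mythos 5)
Your reduction and setup are correct: the commutation argument giving $i=j$ is valid (and essentially the same as the paper's first step, which derives $\pi^{k-1}(i)=i=\pi^{k-1}(j)$ from $\pi(n)=\pi^k(n)$), and the observations that $\sigma=\pi^{k-1}$ is increasing on $A=\pi([1,i])$ and on $B=[n]\setminus A$, fixes every $\pi$-orbit through a fixed point, and that it suffices to prove $\sigma(A)=A$, are all sound. But the proof stops exactly where the lemma's actual content lies: you explicitly defer the step showing no block-crossing occurs, and the mechanism you sketch for it does not work as stated. Take the largest moved element $m^*$, say $m^*\in A$ with $\sigma(y)=m^*$, $y\in B$, $y<m^*$. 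Descent-counting alone does not yield a contradiction: since all values $>m^*$ occupy the same positions under $\pi$ and $\pi^k$, monotonicity of $\pi^k$ on the second block merely forces $m^*$ to sit at position $x_2-1$, where $x_2$ is the first position after $i+1$ whose $\pi$-value exceeds $m^*$; this configuration has exactly one descent, so no second descent "appears" and the Grassmanian property of $\pi^k$ is not violated at this level of the argument.

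The missing idea — the heart of the paper's proof — is an arithmetic interplay between positions and values. Because $\pi$ increases from $\pi(x_2)\geq m^*+1$ up to $\pi(n)=n-t-1$, one gets $\pi(x_2)\le x_2-t-1$, hence $x_2-1\ge m^*+t+1>m^*$. So the \emph{number} $x_2-1$ is itself larger than $m^*$ and is therefore fixed by $\sigma$ (everything above $m^*$ is fixed, by your extremal choice, or by the paper's downward induction on values). Consequently $\pi^k(x_2-1)=\pi(\sigma(x_2-1))=\pi(x_2-1)<m^*$, contradicting $\pi^k(x_2-1)=m^*$. The paper runs this as a downward induction on $r$ (showing $\pi^{k-1}(n-r)=n-r$ directly in each case, with the case split $x=y$, $x=x_2-1$, $y=x_2-1$), which is the same extremal structure you propose; but without the inequality $x_2-1\ge n-r+1$ and the resulting "positions above $m^*$ are fixed points, so $\pi$ and $\pi^k$ agree there" step, your crossing configuration is consistent and the proof cannot be completed along the route you describe.
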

\begin{proof}
Since $\pi(n)=\pi^k(n)=n-t-1$, we have $\pi^{k-1}(n)=n$. Thus, $\pi^{k-2}(n)=i$ as $\pi(i)=n$, and so $\pi^{k-1}(i)=i$. However, we also have $\pi^{k-1}(j)=i$ as $\pi^k(j)=n$. Hence, we must have $i=j$. 

We now use induction to show that $\pi^{k-1}(n-\ell)=n-\ell$ for all $0\leq\ell\leq n-1$, or equivalently $\pi^{-1}(n-\ell)=\pi^{-k}(n-\ell)$. The case $\ell=0$ follows from above. For all $\ell\in[t]$, since $i=j$, from assumption we have $\pi(i-\ell)=\pi^k(i-\ell)=n-\ell$, and so $\pi^{k-1}(n-\ell)=n-\ell$. The case $\ell=t+1$ follows similarly from $\pi(n)=\pi^k(n)=n-t-1$. Now assume $r>t+1$ and $\pi^{k-1}(n-\ell)=n-\ell$ for all $0\leq\ell\leq r-1$. 

Let $x,y$ be such that $\pi(x)=\pi^k(y)=n-r$. From induction hypothesis, $\pi^{-1}(n-\ell)=\pi^{-k}(n-\ell)$ for all $0\leq\ell\leq r-1$. Let $x_1$ be the smallest index such that $\pi(x_1)\in\{n-r+1,\ldots,n\}$ and let $x_2$ be the smallest index larger than $i+1$ such that $\pi(x_2)\in\{n-r+1,\ldots,n\}$. Since the unique descent of $\pi$ is at $i$, we must have $x,y\in\{x_1-1,x_2-1\}$. If $x=y$, then $\pi^{k-1}(n-r)=n-r$ and we are done. Otherwise, $x=x_2-1$ or $y=x_2-1$. Since $x_2>i+1$, we have $\pi(x_2)<\cdots<\pi(n)=n-t-1$, which implies $n-r+1\leq\pi(x_2)\leq x_2-t-1$, and so $x_2-1\geq n-r+1+t\geq n-r+1$. If $x=x_2-1$, then from induction hypothesis, $\pi^{k-1}(x)=\pi^{k-1}(x_2-1)=x_2-1=x$, so $n-r=\pi(x)=\pi^k(x)$ and $\pi^{k-1}(n-r)=n-r$, as required. If $y=x_2-1$, then similarly, $n-r=\pi^k(y)=\pi(\pi^{k-1}(y))=\pi(y)$ and so $\pi^{k-1}(n-r)=n-r$ as well. This completes the induction and the proof that $\pi^{k-1}=\text{id}$.
\end{proof}

\begin{proof}[Proof of~\cref{thr:twiceGrassmanian}]
It is clear that a Grassmanian permutation $\pi\in\mathcal{S}_n$ satisfying $\pi(1)\not=1$, $\pi(n)\not=n$ is of the form 
$$\pi=\pi(1)\ldots\pi(i-1)n1\pi(i+2)\ldots\pi(n),$$
where $i\in[n-1]$, $\pi(1)<\cdots<\pi(i-1)$, and $\pi(i+2)<\cdots<\pi(n)$. Let $\pi$ be such a permutation satisfying $\des({\pi^k})=1$. From above, there exists $i\in[n-1]$ such that $\pi(i)=n$ and $\pi(i+1)=1$. 

\begin{claim}\label{claim:pi^knot1n}
$\pi^k(1)\not=1$ and $\pi^k(n)\not=n$.
\end{claim}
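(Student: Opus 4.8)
The plan is to argue by contradiction and to reduce everything to ruling out the simultaneous occurrence of $\pi^k(1)=1$ and $\pi^k(n)=n$. First I would record the two facts that drive the argument. Since $\pi(i)=n$ and $\pi(i+1)=1$, the positions $i$ and $n$ lie in a common cycle of $\pi$, as do $i+1$ and $1$; hence $\pi^k(n)=n$ is equivalent to $\pi^k(i)=i$, and $\pi^k(1)=1$ is equivalent to $\pi^k(i+1)=i+1$. Second, because $\pi^k$ is Grassmanian with exactly one descent, its set of fixed points is an initial segment together with a final segment, say $[1,a]\cup[b,n]$ with $a<b$; since $\pi^k\neq\text{id}$, the middle block $[a+1,b-1]$ is nonempty and every position in it is moved by $\pi^k$. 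This segment structure holds because an increasing run of $\pi^k$ that contains a fixed point must in fact fix an entire initial or final stretch of that run, the values there being forced to be consecutive.

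Next I would show that $\pi^k(1)=1$ and $\pi^k(n)=n$ are in fact equivalent, so that it suffices to rule out their joint occurrence. If $\pi^k(n)=n$, then $i$ is fixed, so $i\in[1,a]$ or $i\in[b,n]$; in the first case $1\le i\le a$ forces $\pi^k(1)=1$, and in the second case $i+1\ge b$ is fixed, which by the cycle correspondence again gives $\pi^k(1)=1$. The reverse implication is entirely symmetric. Thus I may assume both equalities hold, so that $i$ and $i+1$ are both fixed by $\pi^k$. Being adjacent, they cannot straddle the nonempty middle block (otherwise $b\le a+1$, forcing that block to be empty), so either $i,i+1\in[1,a]$ or $i,i+1\in[b,n]$.

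The crux, and the step I expect to be the main obstacle, is to convert this into a contradiction. The set $F=[1,a]\cup[b,n]$ of fixed points of $\pi^k$ is exactly the union of those cycles of $\pi$ whose length divides $k$, so $F$ is $\pi$-invariant and therefore so is the middle block $[a+1,b-1]$. In the case $i,i+1\in[1,a]$ we have $a\ge i+1$, so the middle block is contained in $[i+1,n]$, the second increasing run of $\pi$; in the case $i,i+1\in[b,n]$ it is contained in $[1,i]$, the first run. Either way the nonempty middle block sits inside a single run on which $\pi$ is increasing, and it is mapped to itself bijectively by $\pi$. An order-preserving bijection of a finite set is the identity, so $\pi$ fixes every position in the middle block; but then $\pi^k$ fixes them too, contradicting the fact that they are moved by $\pi^k$. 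This contradiction shows that neither $\pi^k(1)=1$ nor $\pi^k(n)=n$ can hold, proving the claim.
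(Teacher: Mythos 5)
Your proof is correct, but it takes a genuinely different route from the paper's. The paper argues positionally: it first shows that if either $\pi^k(1)=1$ or $\pi^k(n)=n$ holds then both do, via the relations $\pi^{k-1}(1)=i+1$ and $\pi^{-1}(n)=i$ together with case analysis on where the unique descent of $\pi^k$ sits; it then shows $\{\pi^k(2),\ldots,\pi^k(i-1)\}=\{2,\ldots,i-1\}$ (and symmetrically for the other block) by exhibiting two descents otherwise, and finally runs an index-by-index induction to conclude $\pi^k=\text{id}$, contradicting $\des(\pi^k)=1$. You instead exploit cycle structure: the fixed-point set of $\pi^k$ is the union of the $\pi$-cycles whose length divides $k$, hence $\pi$-invariant, and for a single-descent permutation this set must be an initial segment union a final segment; your observation that $i$ and $i+1$ lie in the cycles of $n$ and $1$ respectively gives the equivalence of the two endpoint conditions cheaply, and the punchline is that the nonempty middle block of non-fixed points is a $\pi$-invariant set contained in a single increasing run of $\pi$, so $\pi$ (being a strictly increasing self-bijection of that finite set) fixes it pointwise --- a contradiction. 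Each step you use checks out: the segment structure of the fixed-point set follows because an increasing run forces a fixed point to pin down the whole stretch to its left (first run) or right (second run), and the straddling case is excluded exactly because the middle block is nonempty. What your approach buys is the elimination of both of the paper's inductions in favour of one invariance-plus-monotonicity argument, which also explains conceptually why the claim holds; the paper's version is more elementary bookkeeping with descents and needs no appeal to the cycle decomposition. Both arguments are contradiction proofs that in essence force $\pi^k$ to be the identity.
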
 
\begin{claimproof}
Assume for a contradiction that at least one of $\pi^k(1)=1$ and $\pi^k(n)=n$ holds. We first prove that in fact both have to hold simultaneously. Indeed, if $\pi^k(1)=1$ and $\pi^k(j)=n$ for some $j\not=n$, then the unique descent of $\pi^k$ is at $j$. Note that $\pi^{k-1}(1)=i+1$ as $\pi(i+1)=1$. It follows that $\pi^k(i+1)=i+1$. If $j\geq i+1$, then we get $\pi^k(i)=i$ as $1=\pi^k(1)<\cdots<\pi^k(i+1)=i+1$. But this together with $\pi(i)=n=\pi^k(j)$ implies that $\pi^{k-1}(n)=i=\pi^{k-1}(j)$, so $j=n$, contradiction. If $j\leq i$, then from $i+1=\pi^k(i+1)<\cdots<\pi^k(n)\leq n$, we get $\pi^k(n)=n$, so again $j=n$, contradiction. The case when $\pi^k(n)=n$ and $\pi^k(j)=1$ for some $j\not=1$ is similar, so we have both $\pi^k(1)=1$ and $\pi^k(n)=n$. It follows from $\pi(i)=n$ and $\pi(i+1)=1$ that $\pi^k(i)=i$ and $\pi^k(i+1)=i+1$.

Suppose $\{\pi^k(2),\ldots,\pi^k(i-1)\}\not=\{2,\ldots,i-1\}$, then let $j_1\in\{2,\ldots,i-1\}$ be minimal such that $\pi^{-k}(j_1)>i+1$ and let $j_2\in\{i+2,\ldots,n-1\}$ be maximal such that $\pi^{-k}(j_2)<i$. Then $\pi^k$ has descents at both $\pi^{-k}(j_1)-1$ and $\pi^{-k}(j_2)$, contradicting $\des(\pi^k)=1$. Thus $\{\pi^k(2),\ldots,\pi^k(i-1)\}=\{2,\ldots,i-1\}$ and $\{\pi^k(i+2),\ldots,\pi^k(n-1)\}=\{i+2,\ldots,n-1\}$.

We now show that $\pi^k=\text{id}$, which contradicts $\des(\pi^k)=1$ and proves the claim. First, we use induction to show that $\pi^k(\ell)=\ell$ for all $1\leq\ell\leq i-1$. The base case $\ell=1$ follows from assumption. Suppose this is true for all $1\leq\ell\leq t<i-1$, and suppose for a contradiction that $\pi^k(j)=t+1$ for some $j>t+1$. Note that $j<i-1$ and the unique descent of $\pi^k$ must be at $j-1$. It follows that $\pi^k(\ell)=\ell$ for all $\ell\geq i$. Let $x$ be such that $\pi(x)=t+1$, then $\pi^{k-1}(j)=x$. Since $\pi$ has exactly one descent which is at $i$, and $\pi(1)\geq2$, we must have $x\leq t$ or $x\geq i+2$. If $x\leq t$, then from induction hypothesis, we have $\pi^k(x)=x$. But then $\pi^{k-1}(t+1)=x$ as well, so $j=t+1$, contradiction. If $x\geq i+2$, then $\pi^k(x)=x$ from above, so again $\pi^{k-1}(t+1)=x$ and $j=t+1$, contradiction. Similarly, we can use induction to show that $\pi^k(\ell)=\ell$ for all $i+1\leq\ell\leq n$, so $\pi^k=\text{id}$, as required.  
\end{claimproof}

From~\cref{claim:pi^knot1n}, there exists $j\in[n-1]$ such that $\pi^k(j)=n$ and $\pi^k(j+1)=1$. Suppose that $\pi^{k-1}\not=\text{id}$, we show that $\pi$ is a cyclic shift permutation.

First assume that $i\leq j$. We use induction on $\ell$ to show that $\pi(i-\ell)=\pi^k(j-\ell)=n-\ell$ for all $0\leq \ell\leq i-1$. Assuming this, we have $\{\pi(i+2),\ldots,\pi(n)\}=\{2,\ldots,n-i\}$. It then follows from $\pi(i+2)<\cdots<\pi(n)$ that $\pi(i+t)=t$ for $2\leq t\leq n-i$, and thus that $\pi$ is the cyclic shift permutation given by $\pi(\ell)\equiv\ell+n-i\pmod{n}$, as required.

The base case $\ell=0$ follows from assumptions. Now assume this has been proved for all $0\leq\ell\leq t<i-1$, and assume for a contradiction that $\pi(i-t-1)=\pi^k(j-t-1)=n-t-1$ is not true. Note that as $\pi$ and $\pi^k$ both have exactly one descent, we must have either $\pi(i-t-1)=n-t-1$ or $\pi(n)=n-t-1$, and either $\pi^k(j-t-1)=n-t-1$ or $\pi^k(n)=n-t-1$.

\textbf{Case 1.} $\pi(n)=n-t-1$ and $\pi^k(n)=n-t-1$. Then by Lemma \ref{lemma:k-1isid}, $\pi^{k-1}=\text{id}$, contradiction. 

\textbf{Case 2.} $\pi(n)=n-t-1$ and $\pi^k(j-t-1)=n-t-1$. It follows that $\pi^{k-1}(j-t-1)=n$, and so $\pi(j)=j-t-1$ as $\pi^k(j)=n$. Since $j-t-1\not=n$ and $\pi(i)=n$, we have $j\not=i$ and so $j\geq i+1$. It follows that $j-t-1=\pi(j)<\cdots<\pi(n)=n-t-1$, so we must have $\pi(j+\ell)=j-t-1+\ell$ for all $0\leq\ell\leq n-j$. In particular, $\pi(j+1)=j-t$. Since $\pi^k(j+1)=1$, we have $\pi^{k-1}(j-t)=1$ and so $\pi^k(j-t)=\pi(1)$. But from induction hypothesis, $\pi^k(j-t)=n-t=\pi(i-t)$, so $\pi(1)=n-t=\pi(i-t)$, and thus $i-t=1$, contradiction.

\textbf{Case 3.} $\pi(i-t-1)=n-t-1$ and $\pi^k(n)=n-t-1$. Then, $\pi^{k-1}(n)=i-t-1$, and from $\pi(i)=n$ we get $\pi^k(i)=i-t-1$. But as $i\leq j$, we have $1\leq\pi^k(1)<\cdots<\pi^k(i)=i-t-1<i$, contradiction. 

If $i>j$, we can similarly use induction to show that $\pi(i+\ell)=\pi^k(j+\ell)=\ell$ for all $1\leq\ell\leq n-i$, which again implies that $\pi$ is the cyclic shift given by $\pi(\ell)\equiv\ell+n-i\pmod{n}$, as required. \qedhere

% Similarly, we use induction to show that $\pi(i+\ell)=\pi^k(j+\ell)=\ell$ for all $1\leq\ell\leq n-i$. Like above, this suffices as it follows that $\pi$ is the cyclic shift permutation by $n-i$ positions. The base case $\ell=1$ follows from assumptions. Now assume this holds for all $1\leq\ell\leq t<n-i$. Assume for a contradiction that $\pi(i+t+1)=\pi^k(j+t+1)=t+1$ is not true. Again like above, we must have either $\pi(i+t+1)=t+1$ or $\pi(1)=t+1$, and either $\pi^k(j+t+1)=t+1$ or $\pi^k(1)=t+1$ because $\pi$ and $\pi^k$ both have exactly one descent. 

% \textbf{Case 2.1.} $\pi(1)=t+1$ and $\pi^k(1)=t+1$. 
% % Then, we have $\pi^{k-1}(1)=1$. Thus, $\pi^{k-2}(1)=i+1$ as $\pi(i+1)=1$, and so $\pi^{k-1}(i+1)=i+1$. However, we also have $\pi^{k-1}(j+1)=i+1$ as $\pi^k(j+1)=1$. It follows that $i+1=j+1$ and so $i=j$, contradiction. 

% \textbf{Case 2.2.} $\pi(1)=t+1$ and $\pi^k(j+t+1)=t+1$. 
% % It follows that $\pi^{k-1}(j+t+1)=1$, and so $\pi(j+1)=j+t+1$ as $\pi^k(j+1)=1$. Since $t+1=\pi(1)<\cdots<\pi(j+1)=j+t+1$, we must have $\pi(\ell)=\ell+t$ for all $1\leq\ell\leq j+1$. In particular, $\pi(j)=j+t$. Since $\pi^k(j)=n$, we have $\pi^{k-1}(j+t)=n$. But from induction hypothesis, $\pi^k(j+t)=t=\pi(i+t)$, so $\pi(n)=t=\pi(i+t)$, and thus $i+t=n$, contradicting $t<n-i$.

% \textbf{Case 2.3.} $\pi(i+t+1)=t+1$ and $\pi^k(1)=t+1$. 
% % Then, $\pi^{k-1}(1)=i+t+1$, and from $\pi(i+1)=1$ we get $\pi^k(i+1)=i+t+1$. But as $j<i$ and $\pi^k(j+1)<\cdots<\pi^k(n)$, we have $i+t+1=\pi^k(i+1)<\cdots<\pi^k(n)<n$, contradiction.
\end{proof}

By Theorem \ref{thr:twiceGrassmanian}, and as cyclic shifts are easy to handle, it suffices now to prove Theorem \ref{thr:Grassmanianqthpower}, which counts the number of $k$-th roots of the identity permutation that are Grassmanian. This generalises the essence of~\cite[Thm.~2.3, 3.1]{AG}, where the $k=2,3$ cases are solved. 

We call a permutation whose cycle decomposition is a single cycle of length $n$ an $n$-cycle. Note that an $n$-cycle $\pi$, where $n>1$, automatically satisfies $\pi(1) \not=1$ and $\pi(n) \not=n.$ We first prove a series of lemmas about Grassmanian $n$-cycles.
\begin{comment}
For ease of the reader, we repeat the statement of~\cref{thr:Grassmanianqthpower}.
\begin{theorem}%\label{thr:Grassmanianqthpower}
For every prime $p\geq3$, let $\alpha_p=\frac{2^p-p-2}{p}$. 
Then the number of Grassmanian permutation $\pi\in\mathcal{S}_n$ with $\pi(1)\not=1$, $\pi(n)\not=n$ and $\pi^p=id$ is
$$\begin{cases}
\binom{\frac np+\alpha_p}{\alpha_p}, \text{ if } p\mid n,\\
0, \text{ if } p\nmid n.\\
\end{cases}$$
\end{theorem}
\end{comment}

\begin{lemma}~\cite[Thm.~9.4]{GR93}\label{lem:descentati}
For all $1\leq i\leq n-1$, the number of $n$-cycles with a unique descent at position $i$ is
$$\frac1n\sum_{d\mid \gcd(i,n)}\mu(d)\binom{n/d}{i/d}.$$
\end{lemma}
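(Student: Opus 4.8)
The plan is to derive this as a special case of the Gessel--Reutenauer theory of \cite{GR93}, which encodes the joint distribution of cycle type and descent set through symmetric functions. The starting point is the symmetric function attached to a single $n$-cycle, namely $\mathbf{L}_n=\frac1n\sum_{d\mid n}\mu(d)\,p_d^{n/d}$, where $p_d$ is the power-sum symmetric function of degree $d$. The key input is the Gessel--Reutenauer correspondence: for a composition $\alpha=(\alpha_1,\dots,\alpha_m)$ of $n$, the number of $n$-cycles whose descent set is contained in $\{\alpha_1,\alpha_1+\alpha_2,\dots\}$ equals the Hall scalar product $\langle \mathbf{L}_n,\, h_{\alpha_1}\cdots h_{\alpha_m}\rangle$, where $h_j$ is the complete homogeneous symmetric function of degree $j$. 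Taking $\alpha=(i,n-i)$, the quantity to compute is therefore $\langle \mathbf{L}_n, h_i h_{n-i}\rangle$.

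Next I would carry out the scalar-product computation, which is entirely routine. Using $h_j=\sum_{\nu\vdash j}z_\nu^{-1}p_\nu$ together with $\langle p_\mu,p_\nu\rangle=z_\mu\delta_{\mu\nu}$, one finds for each divisor $d\mid n$ that $\langle p_d^{n/d}, h_i h_{n-i}\rangle$ counts the ways of splitting the $n/d$ equal parts of the partition $(d^{n/d})$ into a block of total size $i$ and a block of total size $n-i$. Such a split exists only when $d\mid i$, in which case there are exactly $\binom{n/d}{i/d}$ of them; hence $\langle p_d^{n/d}, h_i h_{n-i}\rangle=\binom{n/d}{i/d}$ when $d\mid\gcd(i,n)$ and $0$ otherwise. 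Substituting into the definition of $\mathbf{L}_n$ and using that the binomial vanishes unless $d\mid i$ yields
\[
\langle \mathbf{L}_n, h_i h_{n-i}\rangle=\frac1n\sum_{d\mid n}\mu(d)\binom{n/d}{i/d}=\frac1n\sum_{d\mid\gcd(i,n)}\mu(d)\binom{n/d}{i/d},
\]
which is the claimed formula. Finally I would reconcile the two descriptions of the count: the scalar product above enumerates $n$-cycles whose descent set is \emph{contained in} $\{i\}$, i.e.\ is either empty or equal to $\{i\}$. The only permutation with empty descent set is the identity, which is not an $n$-cycle for $n\geq2$, so for $1\leq i\leq n-1$ every $n$-cycle counted has descent set exactly $\{i\}$, matching the statement.

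The main obstacle is not any of the short steps above but the deep input on which they rest: the Gessel--Reutenauer correspondence itself, which translates the constraint ``cycle type $(n)$ with prescribed descent set'' into a Hall scalar product with $\mathbf{L}_n$. A self-contained, elementary alternative would instead reformulate the problem combinatorially — a permutation with descent set $\subseteq\{i\}$ is precisely $\pi_S$ for some $i$-subset $S\subseteq[n]$ (place $S$ increasingly in the first $i$ positions and its complement increasingly afterwards) — and then identify the $\pi_S$ that are $n$-cycles with aperiodic binary necklaces of length $n$ having $i$ ones, whose number is the standard Witt--necklace count $\frac1n\sum_{d\mid\gcd(i,n)}\mu(d)\binom{n/d}{i/d}$. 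Constructing that bijection, i.e.\ making the passage between one-line descent structure and cycle structure explicit, is the genuinely hard part of this route, and is exactly what the symmetric-function machinery packages away.
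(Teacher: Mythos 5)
The paper gives no proof of this lemma at all --- it is quoted verbatim from \cite{GR93} --- so the only proof to compare against is the one in that source, and your argument is essentially the Gessel--Reutenauer derivation itself: pair the Lyndon symmetric function $\mathbf{L}_n=\frac1n\sum_{d\mid n}\mu(d)p_d^{n/d}$ against $h_ih_{n-i}$ via their main correspondence theorem. Your computation is correct: $\langle p_d^{n/d},h_ih_{n-i}\rangle$ counts the splittings of the $n/d$ equal parts of $(d^{n/d})$ into blocks of total sizes $i$ and $n-i$, which exist precisely when $d\mid i$ and then number $\binom{n/d}{i/d}$; and the passage from ``descent set contained in $\{i\}$'' to ``descent set equal to $\{i\}$'' is handled correctly, since the identity is the unique permutation with empty descent set and is not an $n$-cycle for $n\geq2$. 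The only caveat is that the ``deep input'' your proof rests on is the main theorem of the very paper being cited, so what you have written is a reconstruction of the cited proof rather than an independent one --- which is legitimate, and mirrors what the paper does by citing; the aperiodic-necklace route you sketch at the end is what a genuinely self-contained alternative would require.
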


\begin{lemma}\label{lem:Np}
    The number $N_n$ of Grassmanian $n$-cycles is
    $$\frac1n\sum_{d\mid n,d\not=n}\mu(d)(2^{\frac nd}-2).$$
    In particular, if $n=p$ is prime, then $N_p=\frac1p(2^p-2)$.
\end{lemma}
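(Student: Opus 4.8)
The plan is to sum the formula from~\cref{lem:descentati} over all possible descent positions. First I would observe that for $n\geq2$, an $n$-cycle must have at least one descent: a permutation with no descents is the increasing (identity) permutation, which is not a single $n$-cycle. Hence every Grassmanian $n$-cycle has \emph{exactly} one descent, so its unique descent lies at some position $i\in[n-1]$, and therefore
$$N_n=\sum_{i=1}^{n-1}\frac1n\sum_{d\mid\gcd(i,n)}\mu(d)\binom{n/d}{i/d}.$$

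Next I would swap the order of summation. The condition $d\mid\gcd(i,n)$ is equivalent to $d\mid n$ together with $d\mid i$, so after fixing a divisor $d$ of $n$ the inner sum runs over the multiples of $d$ in $[n-1]$. Writing $i=dj$ and setting $m=n/d$, these multiples correspond exactly to $j\in\{1,\ldots,m-1\}$; in particular the term $d=n$ contributes nothing, since there is no multiple of $n$ in $[n-1]$. This gives
$$N_n=\frac1n\sum_{d\mid n,\,d\neq n}\mu(d)\sum_{j=1}^{n/d-1}\binom{n/d}{j}.$$

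The final step is to evaluate the inner binomial sum. Since $\sum_{j=0}^{m}\binom{m}{j}=2^m$, removing the $j=0$ and $j=m$ terms yields $\sum_{j=1}^{m-1}\binom{m}{j}=2^m-2$ with $m=n/d$, and substituting this back gives the claimed formula for $N_n$. The prime case is then immediate: when $n=p$ is prime, the only divisor of $p$ other than $p$ is $1$, so only the $d=1$ term survives and $N_p=\frac1p(2^p-2)$.

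The only points requiring care are the interchange of summations and the correct treatment of the two boundary effects, namely excluding $i=n$ from the set of descent positions (which is why the binomial sum stops at $m-1$ rather than $m$, producing the ${-}2$) and the vanishing of the $d=n$ term. Neither presents a genuine obstacle, so I expect this to be a short computation once the reduction to ``exactly one descent'' is made.
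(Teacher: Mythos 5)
Your proposal is correct and follows essentially the same route as the paper: both sum the formula of \cref{lem:descentati} over all descent positions $i\in[n-1]$, swap the order of summation so the $d=n$ term vanishes, and evaluate the inner sum as $2^{n/d}-2$. Your additional remark that an $n$-cycle with $n\geq2$ cannot be descent-free (hence has \emph{exactly} one descent) is a small justification the paper leaves implicit, but otherwise the two arguments coincide.
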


\begin{proof}
    Every Grassmanian $n$-cycle has a unique descent at some index $i\in[n-1]$, so by Lemma \ref{lem:descentati}, 
    $$N_n=\frac1n\sum_{i=1}^{n-1}\sum_{d\mid\gcd(i,n)}\mu(d)\binom{n/d}{i/d}=\frac1n\sum_{d\mid n,d\not=n}\mu(d)\sum_{i=1}^{\frac nd-1}\binom{n/d}{i}=\frac1n\sum_{d\mid n,d\not=n}\mu(d)(2^{\frac nd}-2).$$
When $n=p$ is a prime number, the sum above contains only one term and equals $\frac1p(2^p-2)$.
\end{proof}

\begin{lemma}\label{lem:existence_composition}
     Let $\alpha\in\mathcal{S}_r, \beta\in\mathcal{S}_s$ be Grassmanian permutations with no fixed point. Then, there exists a Grassmanian permutation $\pi\in\mathcal{S}_{r+s}$ with a partition $[r+s]=A\cup B$, such that the restrictions of $\pi$ to $A$ and $B$ are permutations isomorphic to $\alpha$ and $\beta$, respectively.
\end{lemma}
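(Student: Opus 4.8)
The plan is to reduce the statement to the choice of a single invariant set, isolate a convenient normal form, and then build the required interleaving by induction. First I would record the reduction. Once we fix a subset $A\subseteq[r+s]$ with $|A|=r$ and declare that $\pi$ restricts to $\alpha$ on $A$ and to $\beta$ on $B=[r+s]\setminus A$ via the order-preserving bijections $A\to[r]$ and $B\to[s]$, the permutation $\pi$ is completely determined: writing $\sigma_1<\cdots<\sigma_r$ for $A$ and $\tau_1<\cdots<\tau_s$ for $B$, we have $\pi(\sigma_m)=\sigma_{\alpha(m)}$ and $\pi(\tau_n)=\tau_{\beta(n)}$. Since $\alpha,\beta$ are fixed-point-free, so is $\pi$; and as the identity is the only descent-free permutation, $\pi$ automatically has at least one descent. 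Thus the whole task becomes: choose $A$ so that $\pi$ has \emph{at most} one descent.

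Next I would isolate a normal form. A fixed-point-free Grassmanian permutation has exactly one descent, and that descent always sends the largest value to the smallest: since $\pi(1)$ is the minimum of the first run and $\pi(1)\ne1$, the value $1$ lies in the second run, and dually $\pi(n)\ne n$ forces $n$ into the first run, so at the descent position $d$ we have $\pi(d)=n$ and $\pi(d+1)=1$. Applied to the blocks, $\alpha$'s descent sends $r\mapsto1$ and $\beta$'s sends $s\mapsto1$. The construction should therefore aim to \emph{merge the two blocks so that both of their $\max\to\min$ drops are realised at one global drop $n\to1$}: one block supplies the peak $n$, the other the valley $1$, and the two first runs (resp.\ second runs) interleave into a single increasing run.

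I would then prove existence of a suitable $A$ by induction on the number of cycles of $\beta$, the base case $\beta=\mathrm{id}_\emptyset$ giving $\pi=\alpha$. For the inductive step, peel off the cycle $C$ of $\beta$ containing its maximum value $s$; the remainder $\beta'$ is again fixed-point-free (deleting whole cycles cannot create fixed points) and Grassmanian (deleting entries never increases the descent count, so $\des(\beta')\le\des(\beta)=1$), and it has one fewer cycle. By the induction hypothesis $\alpha$ and $\beta'$ compose into a Grassmanian $\pi'$. I then reinsert $C$ so that its own $\max\to\min$ drop is threaded onto the drop of $\pi'$, with the top of $C$ becoming the new global peak $n$, placed immediately before the value $1$.

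The hard part will be exactly this reinsertion step, and its difficulty is genuinely self-referential: in the shuffle that defines $A$, each entry's position and its image are both global ranks fixed by the same shuffle, so ``first run increasing'' is precisely the condition that, across the two blocks, the order of entries as positions matches the order of their images as values. This rules out any naive block-concatenation or ``insert one block into the descent gap of the other'': for example composing two increasing $3$-cycles ($\alpha=\beta=231$) forces the perfectly interleaved $A=\{1,3,5\}$ and yields the cyclic shift $345612$, while a contiguous insertion of $\beta$ into $\alpha$'s descent gap provably creates two or three descents. I expect the bulk of the work to lie in verifying that the peeled cycle $C$ can always be threaded back so as to contribute only the shared $n\to1$ drop, using the normal form of the second step to control the values immediately surrounding the insertion and to certify that no second descent appears.
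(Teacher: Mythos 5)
Your first two paragraphs are sound and in fact coincide with the paper's own reformulation: fixing the set $A$ determines $\pi$ completely, so the task is to choose an interleaving of the two blocks with at most one descent, and the normal form you isolate (each block's $\max\to\min$ drop, hence the need to realise both drops at a single global drop $n\to 1$) is exactly the necessary condition the paper derives before its construction. But your proof stops where the lemma actually starts. The entire content of the statement is the \emph{existence} of an interleaving that realises this normal form without creating a second descent, and your proposal explicitly defers that step: ``I expect the bulk of the work to lie in verifying that the peeled cycle $C$ can always be threaded back\ldots'' is a declaration of intent, not an argument. Your own discussion shows why it cannot be waved through: the condition ``each run increasing'' couples every element's position to its image's position under the same unknown shuffle, so no local or greedy insertion obviously works. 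The paper resolves this self-referential difficulty with an explicit iterative procedure: start from the canonical order $1\prec\cdots\prec t\prec\overline{1}\prec\cdots\prec\overline{m}\prec t+1\prec\cdots\prec r\prec\overline{m+1}\prec\cdots\prec\overline{s}$, and repeatedly swap an adjacent pair $i,\overline{j}$ lying in the same part whenever $f(i)\succ f(\overline{j})$; monotonicity of the process (elements of $[r]$ only ever move up, and once $f(i)\succ f(\overline{j})$ holds it persists) yields termination and the one-descent property in either terminating configuration. Nothing in your proposal plays the role of this argument, and without it there is no proof.

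There is also an unaddressed defect in the induction scheme itself, beyond the missing reinsertion step. Suppose the inductive hypothesis gives a Grassmanian $\pi'$ containing $\alpha$ and $\beta'$, and suppose you do manage to merge $\pi'$ with the cycle $C$ into a Grassmanian $\pi$. You then need $\pi$ restricted to $B$ (the positions carrying $\beta'$ and $C$) to be isomorphic to $\beta$ itself, not merely to \emph{some} Grassmanian shuffle of $\beta'$ with $C$. The relative order of the $\beta'$-elements and the $C$-elements inside $\pi$ is dictated by the global Grassmanian constraint --- such interleavings are highly rigid, cf.\ \cref{lem:determined_order} --- and you give no argument that this forced interleaving agrees with the one they have inside $\beta$. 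One could hope to repair this by proving a uniqueness statement for Grassmanian merges of a permutation with a cycle (then $\pi|_B\cong\beta$ would follow automatically, since $\beta$ is itself such a merge), but that is an additional lemma you would need to formulate and prove, and the paper's uniqueness lemma covers only pairs of distinct cycles. The paper sidesteps the issue entirely by merging $\alpha$ and $\beta$ whole, in one pass, rather than cycle by cycle.
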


\begin{proof}
    To distinguish it from $\alpha$, we view $\beta$ as a permutation on the set $[\overline{s}]=\{\overline 1,\overline 2,\ldots,\overline s\}$. For notational convenience, define $f:[r]\cup[\overline{s}]\to[r]\cup[\overline{s}]$ by $f(i)=\alpha(i)$ for $i\in[r]$ and $f(\overline{i})=\beta(\overline{i})$ for $\overline{i}\in[\overline{s}]$. Suppose $\alpha(t)=r, \alpha(t+1)=1$ and $\beta(\overline m)=\overline s, \beta(\overline{m+1})=\overline 1$.
    
    After relabelling each $x_i$ to $i$, the desired permutation $\pi$ is equivalent to an ordering $x_1\prec x_2\prec\cdots\prec x_{r+s}$ of the elements in $[r]\cup[\overline{s}]$, such that the elements in $[r]$ and the elements in $[\overline{s}]$ are still ordered in the usual way, and the sequence $f(x_1), f(x_2),\ldots,f(x_{r+s})$ has exactly one descent in this ordering $\prec$. Note that as $t\prec t+1$ and $f(t+1)=\alpha(t+1)\prec\alpha(t)=f(t)$, there must be a descent somewhere between $t$ and $t+1$. Similarly, there is a descent between $\overline{m}$ and $\overline{m+1}$. Hence, for $f(x_1), f(x_2),\ldots,f(x_{r+s})$ to have at most one descent, we must have $t\prec\overline{m+1}$ and $\overline{m}\prec t+1$, and that the unique descent is at $x_{t+m}$. It follows that the smallest $t+m$ elements under $\prec$ must be $[t]\cup[\overline{m}]$, and we call them the first part of $\prec$. The largest $r+s-t-m$ elements under $\prec$ are called the second part of $\prec$.
    
    We construct such an ordering with the following process. Start with the ordering $1\prec2\prec\cdots\prec t\prec\overline 1\prec\overline 2\prec\cdots\prec\overline{m}\prec t+1\prec\cdots\prec r\prec\overline{m+1}\prec\cdots\prec\overline s$. In every step, if within the same part of $\prec$ there is some $i$ immediately preceding some $\overline{j}$, but $f(i)\succ f(\overline{j})$, then we swap the order of $i$ and $\overline{j}$ in $\prec$. Since elements in $[r]$ are only ever moved up in the ordering $\prec$, if at some point during the process we have $i\succ\overline j$, then this is true from then on. Also, if within the same part we have $i\succ\overline j$ at some point, then $i$ must have been swapped with $\overline j$ in some previous step, so from that point on we have $f(i)\succ f(\overline j)$.
    
    This process must terminate. If it ends with $\overline 1\prec\overline 2\prec\cdots\prec\overline{m}\prec1\prec2\prec\cdots\prec t\prec\overline{m+1}\prec\cdots\prec\overline s\prec t+1\prec\cdots\prec r$, then from the observation above $f(x_1), f(x_2),\ldots,f(x_{r+s})$ has exactly one descent. If the process ends earlier because no further swap is needed, then from the definition of swaps, $f$ is increasing within both parts, and thus has exactly one descent.
\end{proof}

\begin{example}
    Let $\alpha=231$ and $\beta=\overline2\overline5\overline1\overline3\overline4$. The process above starts with $2\prec3\prec\overline2\prec\overline5\prec1\prec\overline1\prec\overline3\prec\overline4$. The first step swaps $3$ and $\overline{2}$, and changes it to $2\prec\overline2\prec3\prec\overline5\prec1\prec\overline1\prec\overline3\prec\overline4 $, because $\alpha(3)=1\succ\overline 5=\beta(\overline 2)$. The process now terminates as no more swap is needed. By relabelling, we get the desired permutation $\pi=34581267$, which indeed has exactly one descent. 
\end{example}

\begin{lemma}\label{lem:determined_order}
    Given distinct Grassmanian cycles $\alpha, \beta$ of length at least two, there is at most one Grassmanian permutation which decomposes exactly into two cycles isomorphic to $\alpha$ and $\beta$.
\end{lemma}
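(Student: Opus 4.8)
The plan is to show that any Grassmanian permutation $\pi\in\mathcal S_{r+s}$ with the prescribed cycle type is completely rigid, by pinning down the way its two cycles interleave. Write the two cycles as position sets $A=\{a_1<\cdots<a_r\}$ and $B=\{b_1<\cdots<b_s\}$, normalised so that the pattern of $\pi|_A$ is $\alpha$ and that of $\pi|_B$ is $\beta$; equivalently $\pi(a_i)=a_{\alpha(i)}$ and $\pi(b_j)=b_{\beta(j)}$. Since $\alpha,\beta$ are cycles of length at least two they are fixed-point-free, hence so is $\pi$; being Grassmanian, $\pi$ then has exactly one descent, at some position $c$, and its one-line notation splits into two increasing runs on $[1,c]$ and $[c+1,r+s]$. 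First I would record the forced structure. A fixed-point-free Grassmanian permutation must carry the top of its descent to the global maximum and the bottom to the global minimum (otherwise the extreme value sits at an endpoint and is fixed); applied to $\alpha,\beta,\pi$ this gives $\alpha(t)=r,\ \alpha(t+1)=1$ and $\beta(u)=s,\ \beta(u+1)=1$ (with $t,u$ the descents of $\alpha,\beta$), and $\pi(c)=r+s,\ \pi(c+1)=1$. The subsequence of $\pi$ read along $A$ is exactly the pattern $\alpha$, so it has its unique descent between $a_t$ and $a_{t+1}$; as $\pi$ increases within each run, this forces $a_t\le c<a_{t+1}$, so precisely $a_1,\dots,a_t$ lie in the first run, and likewise precisely $b_1,\dots,b_u$ do. In particular the split point $c=t+u$ is determined by $\alpha,\beta$ alone.

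Next I would determine the interleaving itself, namely for each pair $(i,j)$ whether $a_i<b_j$; write $R(i,j)$ for this truth value, and note that $R$, together with the internal orders of $A$ and $B$, recovers the full order on $[r+s]$ and hence $\pi$. The crucial point is that $R$ obeys a recursion. If $a_i$ and $b_j$ lie in \emph{different} runs the comparison is immediate: $R(i,j)$ is true when $i\le t$ and $j>u$, and false when $i>t$ and $j\le u$. If instead they lie in the \emph{same} run, then since $\pi$ is increasing on each run we have $a_i<b_j$ iff $\pi(a_i)<\pi(b_j)$, that is $R(i,j)=R(\alpha(i),\beta(j))$. Iterating this replaces $(i,j)$ by $(\alpha^m(i),\beta^m(j))$ and halts as soon as the two coordinates land in opposite runs, where $R$ is pinned to a base value. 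Thus, provided the joint orbit of every pair under $(\alpha,\beta)$ eventually meets a pair in opposite runs, $R$—and therefore $\pi$—is uniquely determined, which is exactly the assertion of the lemma.

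The hard part will be justifying this termination, and it is precisely here that distinctness of $\alpha$ and $\beta$ must be used: when $\alpha=\beta$ the diagonal orbits never terminate, reflecting the genuine ambiguity of interchanging two identical cycles. An orbit fails to terminate exactly when $\alpha^m(i)\le t\Leftrightarrow\beta^m(j)\le u$ for every $m$, i.e.\ when the two binary ``run-pattern'' sequences—recording membership in the low block $\{1,\dots,t\}$ along the cycle $\alpha$, respectively $\{1,\dots,u\}$ along $\beta$—coincide as infinite sequences. I would finish by showing that the run-pattern necklace of a Grassmanian cycle is aperiodic (its minimal period equals the cycle length) and is a complete invariant of the cycle: aperiodicity forces $r=s$, since equal periodic words share the same minimal period, and completeness then forces $\alpha=\beta$, contradicting distinctness. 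Both facts should follow from the monotonicity of a Grassmanian cycle on each run together with the single-cycle condition, as any period or any coincidence of these patterns would split the permutation into several identical cyclic blocks. I expect this necklace analysis, rather than the recursion, to be the main technical obstacle.
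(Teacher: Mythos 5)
Your reduction is sound and is packaged genuinely differently from the paper's proof: normalising positions so that exactly $a_1,\dots,a_t$ and $b_1,\dots,b_u$ lie in the first increasing run (hence the descent of $\pi$ sits at $c=t+u$), encoding the unknown interleaving by the predicate $R(i,j)$, and observing that $R(i,j)=R(\alpha(i),\beta(j))$ whenever the pair lies in a common run while $R$ is forced outright on split pairs --- all of this is correct, and uniqueness does follow provided every pair $(i,j)$ eventually reaches a split pair under $(\alpha,\beta)$-iteration. The problem is that everything after this point is asserted rather than proved. The two facts you defer --- (a) the run-pattern word of a Grassmanian $r$-cycle has minimal period exactly $r$, and (b) this word determines the cycle up to isomorphism --- are not side technicalities; they are the entire mathematical content of the lemma, since the bookkeeping before them never uses that $\alpha$ and $\beta$ are single cycles, nor that they are distinct, which is where all the difficulty lives. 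Indeed (a) and (b) are precisely the primitivity and injectivity statements underlying the Gessel--Reutenauer correspondence, of which the counting formula in \cref{lem:descentati} is the numerical shadow, and your one-sentence justification (``any period or coincidence would split the permutation into several identical cyclic blocks'') appeals to that correspondence rather than proving it. As written, this is a genuine gap, one you yourself flag as the main obstacle.

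The gap is fillable, and, instructively, the filling is the same mechanism that drives the paper's proof. For (a): if the word of $\alpha$ had period $p$ with $0<p<r$, then for every $x$ the pair $(x,\alpha^p(x))$ lies in a common run, and this persists under all iterates; since $\alpha$ is increasing on each run, the sign of $\alpha^p(\alpha^m(x))-\alpha^m(x)$ is independent of $m$, and transitivity of the $r$-cycle makes this sign the same for every element of $[r]$. No permutation satisfies $\alpha^p(z)>z$ for all $z$ (take $z=r$), nor $\alpha^p(z)<z$ for all $z$, so $\alpha^p=\text{id}$, contradicting that an $r$-cycle has order $r$. For (b): given $r=s$ and equal words, the map $\psi$ defined by $\psi(\alpha^m(i))=\beta^m(j)$ is a bijection intertwining $\alpha$ with $\beta$ and preserving runs; by (a) every pair $x\neq y$ splits at some first time $m$, relative order propagates unchanged up to time $m$ on both the $\alpha$-side and the $\beta$-side, and at time $m$ it is dictated by run membership, which $\psi$ preserves; hence $\psi$ is increasing, so $\psi=\text{id}$ and $\alpha\cong\beta$, a contradiction. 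Note that this ``monotonicity on runs propagates relative order along the cycle, contradicting transitivity'' device is exactly what the paper does, there phrased as an extremal argument showing the surpassing number is constant along each cycle and then deriving a contradiction through a block decomposition. So your route, once completed, is viable and arguably isolates the key necklace statement more cleanly, but without proofs of (a) and (b) it does not establish the lemma.
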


\begin{proof}
    We use the same notations as in the proof of Lemma \ref{lem:existence_composition}. Suppose $\prec_1,\prec_2$ are two different orderings that would produce two distinct Grassmanian permutations satisfying the required conditions. As proved before, $\prec_1,\prec_2$ have the same first part and the same second part.
    
    Suppose $i\in[r]$ is the index that maximises $v_2-v_1$, where $v_1$ is the number of elements in $[\overline{s}]$ that $i$ is greater than under $\prec_1$, or equivalently the unique element such that $\overline{v_1} \prec_1 i \prec_1 \overline{v_1+1}$, and similarly for $v_2$. Let $j$ be this maximum, and note we can assume without loss of generality that $j\ge1$, which means that $i$ surpasses $j$ additional elements of $[\overline{s}]$ when we go from $\prec_1$ to $\prec_2$. Since $i$ is in the same part of $\prec_1$ and $\prec_2$ and $f$ is increasing in that part, it follows that $\alpha(i)$ also surpasses at least $j$ elements of $[\overline{s}]$, and thus exactly $j$ by maximality. Repeating the argument, since $\alpha$ is a cycle, $\alpha^k(i)$ attains all values of $\alpha$, so each $i\in[r]$ surpasses exactly $j$ elements of $[\overline{s}]$ going from $\prec_1$ to $\prec_2$. By symmetry, there is some $j'\geq1$ such that each $\overline{i}\in[\overline{s}]$ is surpassed by $j'$ elements of $[r]$.
    
    It follows that $\prec_1,\prec_2$ both consists of groups of $j+j'$ elements, where in $\prec_1$ each group begins with a block of $j'$ elements in $[r]$ and ends with a block of $j$ elements in $[\overline{s}]$, and in $\prec_2$ the two blocks in each group of $\prec_1$ are swapped. Since these are the only order swaps, $\alpha$ must send each block of $j'$ elements in $[r]$ to another block of $j'$ elements in $[r]$. As $\alpha$ is increasing in each block, it follows that if $i$ is the $\ell$-th element in a block, then so is $\alpha(i)$. Repeating this argument shows that for every $\ell\in[j']$, $\alpha$ only sends the $\ell$-th elements in these block to each other, contradicting $\alpha$ is a cycle if $j'>1$. Similarly, $j>1$ would contradict that $\beta$ is a cycle. Therefore, we must have $j=j'=1$, and so $r=s$. We show that $\alpha$ is isomorphic to $\beta$, which is a contradiction. Indeed, for every $i\in[r]$, the groups containing $i$ is just $i$ and $\overline{i}$. Going from $\prec_1$ to $\prec_2$, $i$ swapped with $\overline{i}$, so $\alpha(i)$ swapped with $\beta(\overline{i})$, implying that they are in the same group and hence $\overline{\alpha(i)}=\beta(\overline{i})$.
\end{proof}

\begin{example}
    A situation where $j=1,j'=2$ could be 
     $3\prec_14\prec_1\overline{2}\prec_15\prec_16\prec_1\overline{3}\prec_11\prec_12\prec_1\overline{1}$ and $\overline{2}\prec_23\prec_24\prec_2\overline{3}\prec_25\prec_26\prec_2\overline{1}\prec_21\prec_22$. It follows that $\alpha$ must send $3,5,1$ to each other and $4,6,2$ to each other, and thus is not a 6-cycle. 
\end{example}

\begin{proof}[Proof of Theorem \ref{thr:Grassmanianqthpower}]
    The cycle decomposition of a permutation $\pi$ for which $\pi^k=\text{id}$ only contains cycles whose lengths are divisors of $k$.
    Since $\pi$ is Grassmanian, every cycle has to be Grassmanian. Also, $\pi(1) \not=1$, $\pi(n) \not=n$ and $\des(\pi)=1$ implies that $\pi$ has no fixed point, or equivalently 1-cycle.

    By applying~\cref{lem:existence_composition} iteratively, for any solution to Equation~\eqref{eq:count_total_order}, we can combine $x_{d,i}$ Grassmanian $d$-cycles of type $i$ over all $d\in\mathcal{D}_k$ and $i\in[N_d]$ together into a single Grassmanian permutation whose $k$-th power is the identity. On the other hand, if such a collection of cycles can be combined in two different ways, then we can find two of these cycles that do not have the same relative order in these two combinations, contradicting~\cref{lem:determined_order}.  
    
    For a prime $p$, $N_p=\frac1p(2^p-2)$ by~\cref{lem:Np}, and Equation~\eqref{eq:count_total_order} reduces to $\sum_{i=1}^{N_p}x_i=\frac np$. If $p \nmid n$, there is no non-negative integer solution. If $p \mid n$, it is well-known that this equation has $\binom{N_p+\frac np -1}{N_p-1}$ solutions in non-negative integers.
\end{proof}

\bibliographystyle{abbrv}
\bibliography{bibliography}
\end{document}